\documentclass[11pt]{article}
\usepackage{graphicx}
\usepackage{pgfplots}

\usepackage[colorlinks=true,citecolor=blue]{hyperref}
\usepackage{natbib}
\usepackage{graphicx}
\usepackage{amsfonts}
\usepackage{amsmath}
\usepackage{amssymb}
\usepackage{url}
\usepackage{fancyhdr}
\usepackage{indentfirst}
\usepackage{enumerate}
\usepackage{titlesec}
\usepackage{amsthm}
\usepackage{dsfont}
\usepackage[misc]{ifsym}

\usepackage[final]{changes}

%\setcounter{MaxMatrixCols}{10}

%\pgfplotsset{compat=1.8}
\theoremstyle{definition}

\newtheorem{example}{Example}

\newtheorem{definition}{Definition}

\theoremstyle{plain}
\newtheorem{theorem}{Theorem}
\newtheorem{lemma}{Lemma}
\newtheorem{proposition}{Proposition}

\theoremstyle{remark}

\usepackage{cases}

\theoremstyle{definition}

\def\N{\mathbb{N}}

\def\E{\mathbb{E}}

\def\R{\mathbb{R}}

\def\X{\mathcal{X}}

\def\X{\mathcal{X}}

\def\d{\mathrm{d}}

\usepackage[onehalfspacing]{setspace}

\usepackage{bm}
\usepackage{tikz-qtree}
\usepackage{tikz}

\setlength{\bibsep}{1pt}

% change back when removing all comments

\topmargin -1.30cm \oddsidemargin -0.00cm \evensidemargin 0.0cm
\textwidth 16.56cm \textheight 23.20cm

\parindent 5ex

 \title{The   reference interval in higher-order stochastic dominance
 }

 \author{Ruodu Wang\thanks{Department of Statistics and Actuarial Science, University of Waterloo,  Canada. \Letter~{\scriptsize\url{wang@uwaterloo.ca}}}
 	\and
 	Qinyu Wu\thanks{Department of Statistics and Actuarial Science, University of Waterloo,  Canada. \Letter~{\scriptsize\url{q35wu@uwaterloo.ca}}}
 }
\date{\today}

\pgfplotsset{compat=1.18}

\begin{document}
\maketitle

\begin{abstract}
Given two random variables taking values in a bounded interval, we study whether one dominates the other in higher-order stochastic dominance depends on the reference interval in the model setting.
We obtain two results. First, the stochastic dominance relations get strictly stronger when the reference interval shrinks if and only if the order of stochastic dominance is larger than three. Second, for mean-preserving stochastic dominance relations,   the reference interval is irrelevant if and only if the difference between the degree of the stochastic dominance and the number of moments is no larger than three.
These results highlight complications arising from using higher-order stochastic dominance in economic applications.

\textbf{Keywords}: Higher-order stochastic dominance; prudence; temperance; expected utility; 
mean-preserving stochastic dominance
\end{abstract}

\section{Introduction}

Stochastic dominance is a widely used concept in economics, finance, and decision-making under uncertainty, providing a robust method for comparing distributions of uncertain outcomes. This concept is essential in evaluating risk preferences without relying on a specific utility function or preference model, which allows for broad applications across various fields (\cite{L15,P21, SS07}).

% Consider two random variables, $X$ and $Y$, each distributed over the interval $[0,1]$. The question of whether $X$ dominates $Y$ in fourth-order stochastic dominance might intuitively seem straightforward. However, surprisingly, the answer depends on the choice of the underlying interval for evaluation. For example, $X$ may not dominate $Y$ on the interval $[0,1]$, yet extending this interval to $[0,2]$ may result in $X$ dominating $Y$. This example underscores a crucial aspect of stochastic dominance that it can vary significantly with changes in the interval considered.\footnote{For a detailed example, see Example \ref{ex-counter} where $X$ and $Y$ follow specific distributions.}

First-order stochastic dominance (FSD) and second-order stochastic dominance (SSD) are the most popular stochastic dominance rules. More recently,  the application of higher-order stochastic dominance has become increasingly significant, providing deeper insights into risk behavior that extend beyond mere risk aversion; see \cite{ES06, CET13, DS14,NTK14,LN19}. 
%These higher-order stochastic dominance rules are crucial in fields such as economics (\cite{EST09,LN19}), actuarial science () and financial risk management ().

Despite its widespread use, the definition of higher-order stochastic dominance lacks consistency across the literature, sometimes leading to interpretational ambiguity. 
Consider, for example, two distributions, $F$ and $G$, each supported over the interval $[0,1]$. At first glance, one might assume that the question of whether $F$ dominates $G$ in fourth-order stochastic dominance would yield a straightforward answer. However, the consequence can depend significantly on the choice of the reference interval. For instance, if we assess the dominance using only the interval $[0,1]$, $F$ may not dominate $G$. Yet, extending the interval to $[0,2]$ might flip the assessment, resulting in $F$ dominating $G$.\footnote{For a detailed discussion, see Example \ref{ex-counter}, where the specific distributions of $F=(8/9+\epsilon)\delta_{2/9}+(1/9-\epsilon)\delta_1$ and $G=\delta_0/3+2\delta_{4/9}/3$ with $\epsilon=1/100$.
%and $\delta_{\eta}$ denotes the point-mass at $\eta\in\R$.
}  This highlights a crucial aspect of higher-order stochastic dominance: It can vary  with alterations in the interval considered. 
This issue has led to   ambiguous formulations of higher-order stochastic dominance across various texts. For example, Definition 7 in \cite{BMM20} and the related definitions in Section 2.3 of \cite{DE10} both adopted an arbitrary interval that encompasses the support of the distribution, but the definition actually depends on the choice of the interval.

To be specific, two prevalent formulations of higher-order stochastic dominance are found in the literature.
The first formulation, denoted as $n{\rm SD}_{\R}$, can be applied to all distributions with bounded support and is defined as: $F$ dominates $G$ if $F^{[n]}(\eta) \leq G^{[n]}(\eta)$ for all $\eta \in \mathbb{R}$, where $F^{[n]}$ is the higher-order cumulative function, as defined in \eqref{eq-intF}; see e.g., \cite{R76,F80,SS07}. The second formulation was initially proposed by \cite{J80} and has been widely adopted in decision theory; see e.g., \cite{EST09, N16, BMM20}. We denote this as $n{\rm SD}_{[a,b]}$, which specifically applies to distributions supported within the interval $[a, b]$. This criterion requires that $F^{[n]}(\eta) \leq G^{[n]}(\eta)$ for all $\eta$ in $[a,b]$, and also the boundary conditions at $b$, i.e., $F^{[k]}(b) \leq G^{[k]}(b)$ for each $k$ from 1 to $n$.
Both formulations can be described by ordering distributions with their expected utility for some sets of utility functions.

We say that the two formulations are {\color{black}consistent} if the ranking between distributions $F$ and $G$ supported in $[a,b]$ remains the same when assessed under $n{\rm SD}_{\R}$ or $n{\rm SD}_{[a,b]}$.
To the best of our knowledge, although various papers  hint at the inconsistency issue under different settings (see the literature review below),
the consistency of the two formulations of higher-order stochastic dominance was explicitly discussed only in \cite{FP22}.
 In their Section 2.2, they contended that $n{\rm SD}_{[a,b]}$ imposes a more stringent criterion than $n{\rm SD}_{\R}$, suggesting that inconsistencies might arise when $n \geq 4$. 
%We formally conclude such a result into our Proposition \ref{th-main} with a detailed proof, and a simple counterexample in Example \ref{ex-counter} to clarify the inconsistencies for the cases $n\ge 4$. 
We  formally encapsulate these observations in our Proposition \ref{th-main}, providing a detailed analysis and illustrating the inconsistencies for cases where $n \geq 4$ with a straightforward counterexample in Example \ref{ex-counter}.

% Moreover, ranking inconsistencies can emerge when applying $n{\rm SD}$ formulations across different intervals, such as $A$ and $B$, when $n \geq 4$. Notably, our Theorem \ref{prop-mainMN} demonstrates that
% these inconsistencies persist even when distributions are restricted to a subset of the intersection of $A \cap B$, rather than the entire range. This finding underscores the significant impact that the choice of evaluation interval can have on stochastic dominance relations, highlighting the necessity for careful consideration in both theoretical analyses and practical applications.

% The observations on higher-order stochastic dominance can be generalized to a $n$th degree $m$-mean preserving stochastic dominance (\cite{L14}), which is a broader class of stochastic dominance rules, including higher-order stochastic dominance, $n$th degree mean preserving stochastic dominance (\cite{DE13}) and $n$th degree risk increase (\cite{E80}) as special cases.

Ranking inconsistencies can also arise when applying $n{\rm SD}$ formulations across different intervals, such as $A$ and $B$, when $n \geq 4$. Our Theorem \ref{prop-nSD} illustrates that these inconsistencies arise even when distributions are confined to a subset of the intersection of $A$ and $B$, rather than the entire intersection. This observation highlights the profound influence that the choice of evaluation interval can exert on stochastic dominance assessments, underscoring the importance of meticulous interval selection in both theoretical analysis and practical implementation.

Furthermore, we show that Proposition \ref{th-main} can be extended to a broader class of stochastic dominance rules known as $n$th degree $m$-mean preserving stochastic dominance (\cite{L14}). This framework includes higher-order stochastic dominance, $n$th degree mean-preserving stochastic dominance (\cite{DE13}), and $n$th degree risk increase (\cite{E80}) as special cases. 

One implication of our results is that, since stochastic domination relations (with $n\ge 4$) get strictly stronger when the reference interval shrinks, it affects both their applications and characterization results.
For instance, a stochastic dominance relation is easier to hold when we enlarge the reference interval, which are usually harmless for real-data applications. 
The results also illustrate a drawback of the higher-order stochastic dominance relations. As stochastic dominance is mostly used as a robust tool for ordering risks without assuming specific preferences, 
the fact that they depend on a reference interval --- a subjective choice of the modeler --- jeopardizes their robustness interpretation. 
{\color{black}
From a theoretical standpoint, it is sensible to introduce the reference interval under the assumption that it encompasses the support of all relevant random variables, allowing for a uniform analysis based on higher-order stochastic dominance with respect to this fixed reference interval. 
This is not the case when 
these theoretical results are applied to real-world problems such as portfolio selection and precautionary saving, where the support of the random variables often cannot be objectively specified, making the choice of the reference interval flexible. 
}

% The rest of the paper is organized as follows. Section \ref{sec:nsd} introduces notation and contains all main results.
% All proofs are presented in Section \ref{sec:proof}. In Section \ref{sec-LR}, we present a literature review on stochastic dominance when applied to distributions restricted to specific subsets of $\mathbb{R}$.

\subsection*{\color{black} Literature on  stochastic dominance on grids and sub-intervals}
Many studies have explored stochastic dominance for distributions restricted to specific subsets of $\mathbb{R}$, with several exploring the consistency of these orderings across various subsets. We discuss some of them here. 

\cite{F76} investigated stochastic dominance on a restricted interval $[0,b]$, defined such that $X$ dominates $Y$ if $F_X^{[n]}(\eta) \leq F_Y^{[n]}(\eta)$ for all $\eta$ within $[0,b]$.\footnote{The order $n$ can take any real value from $[1, \infty)$; however, for our purposes, we consider only integer $n$.} Compared to $n{\rm SD}_{[0,b]}$, Fishburn's criterion does not require the boundary condition at $b$, making it less stringent than both $n{\rm SD}_{[0,b]}$ and $n{\rm SD}_{\R}$. Furthermore, \cite{F80} showed that the stochastic dominance relations in \cite{F76} align with $n{\rm SD}_{\R}$ only for $n\in\{1,2\}$.\footnote{\color{black}
For $n\in\{1,2\}$, the boundary condition that $F_X^{[n]}(b)\le F_Y^{[n]}(b)$ guarantees that $F_X^{[n]}(\eta)\le F_Y^{[n]}(\eta)$ for all $\eta> b$. In particular, for  $\eta\ge b$, $F_X^{[1]}(\eta)= F_Y^{[1]}(\eta)=1$, and the comparison of $F_X^{[2]}(\eta)$ and $F_Y^{[2]}(\eta)$ reduces to comparing the expectations of $X$ and $Y$.
However, this is not the case when $n\ge 3$ as higher moments appear. 
}
In light of our Proposition \ref{th-main}, we conclude that $n{\rm SD}_{[0,b]}$, $n{\rm SD}_{\R}$, and Fishburn's criterion exhibit consistent for  $n\in\{1,2\}$. However, for orders $n \geq 3$, these criteria do not exhibit consistency.

The integral stochastic orderings (\cite{W86,M97}) within a fixed subset of $\mathbb{R}$, specifying that $X$ dominants $Y$ if $\E[u(X)] \geq \E[u(Y)]$ for all functions $u$, which is defined on the subset, in a particular class $\mathcal{F}$. Following this framework, \cite{DLU99} and subsequent works by \cite{FL95} and \cite{DL97} explored stochastic orderings by setting the subset as a grid to compare discrete distributions. In particular, 
\cite{DLU99} showed that the ranking of two discrete distributions, each of the support is contained in a fixed grid, can become inconsistent when stochastic orderings are applied to the original grid and then extended to include an additional point. This inconsistency indicates that the choice of the grid significantly affects the ranking of random variables.

% \cite{DLS98} and \cite{DVL99} focused on $n$-convex and $n$-concave orderings within specific intervals, which align with the concept decreasing and increasing $n$th degree risk in \cite{E80}, receptively. Such orderings are consistent across various intervals (see Theorem 3.3 of \cite{DLS98}). Notably, Remark 3.6 in \cite{DLS98} notes a potential for inconsistencies between $n{\rm SD}$ and $n{\rm SD}_{[a,b]}$, but no concrete counterexamples or detailed analyses were provided. In contrast, our research directly tackles and clarifies these potential inconsistencies, thereby substantiating and expanding upon the preliminary insights offered by these earlier studies.

% \cite{DLS98} and \cite{DVL99} investigated $n$-concave orderings within specific intervals, which corresponds to increasing $n$th degree risk as introduced in \cite{E80}. These orderings are always consistent across different intervals (see our Theorem \ref{prop-mainMN}), which allows us to unify use its counterpart on $\R$.
% Notably, \cite{DLS98} notes the possibility of inconsistencies between $n{\rm SD}$ and $n{\rm SD}_{[a,b]}$ in their Remark 3.6, though it stops short of providing concrete counterexamples or a comprehensive analysis.
% Our research builds upon these observations and directly addresses these gaps, providing clarification of these potential inconsistencies. 

\cite{DLS98} and \cite{DVL99} studied $n$-concave orderings within specific intervals, which correspond to increasing $n$th-degree risk as introduced in \cite{E80}. These orderings are always consistent across different intervals (see our Theorem \ref{prop-mainMN}), enabling us to unify their use with the counterpart on $\R$.
Notably, \cite{DLS98} mentioned the possibility of inconsistencies between $n{\rm SD}$ and $n{\rm SD}_{[a,b]}$ in their Remark 3.6, but they did not provide explicit counterexamples or a detailed analysis. Our research builds on these observations and directly addresses these gaps, providing clarification of these potential inconsistencies.

\section{Main results}\label{sec:nsd}
In this section, we will present all the results, while the proofs will be provided in the next section.
For $a,b\in[-\infty,\infty]$ with $a<b$,
denote by $\mathcal X_{[a,b]}$ the set of all bounded random variables taking values in $[a,b]$. For simplicity, we write $\mathcal X:=\mathcal X_{[-\infty,\infty]}$.
We use capital letters, such as $X$ and $Y$,
to represent random variables, and $F$ and $G$ for distribution functions. For $X\in\mathcal X$, we write $\E[X]$ for the expectation of $X$.
%, $\essinf X$ and $\esssup X$ for the expectation, essential infimum and essential supremum of $X$, respectively. 
Let $F_X$ denote the distribution function of $X$.
%, and $F_X^{-1}$ the left-quantile function, defined as $F_X^{-1}(s)=\inf\{x:F_X(x)\ge s\}$ for $s\in(0,1]$ and $F_X^{-1}(0)=\essinf X$.
We use $\delta_{\eta}$ to represent the point-mass at $\eta\in\R$. For a real-valued function $f$, let $f_-'$ and $f'_+$ be the left and right derivative of $f$, respectively, and denote by $f^{(n)}$ the $n$th derivative for $n \in \mathbb{N}$. Whenever we use the notation $f_-'$, $f'_+$ and $f^{(n)}$, it is understood that they exist.
%We recall that the left and right derivatives of a convex or concave function always exist (see e.g., Proposition A.4 of \cite{FS16}). 
Denote by $[n]:=\{1,\dots,n\}$ with $n\in\N$.
In this paper, all terms like ``increasing", ``decreasing", ``convex", and ``concave" are in the weak sense.

For a distribution function $F$, denote by $F^{[1]}=F$ and define
\begin{align}\label{eq-intF}
F^{[n]}(\eta)=\int_{-\infty}^\eta F^{[n-1]}(\xi)\d \xi,~~\eta\in\R~{\rm and}~n\ge 2.
\end{align}
It is well-known that $F_X^{[n]}(\eta)$ is connected to the expectation of {\color{black}$(\eta-X)_+^{n-1}$} (see e.g., Proposition 1 of \cite{OR01}):
\begin{align*}%\label{eq-SDeq}
F_X^{[n+1]}(\eta)=\frac{1}{n!}\E[(\eta-X)_+^n],~~X\in\mathcal X,~\eta\in\R,~n\ge 1,
\end{align*}
where $x_+=\max\{0,x\}$ for $x\in\R$. 
%This equation will be used frequently in the analysis conducted in this paper.

As introduced earlier, we now detail the two  formulations of $n$th-order stochastic dominance.

% We present the definition of $n$th-order stochastic dominance, a partial order used to compare two random variables or distribution functions (see e.g., \cite{SS07}). 

\begin{definition}\label{def:nSD}[{\color{black}\citet[Section 4.A.7]{SS07}}]
Let $n\in\N$. For $X,Y\in\mathcal X$, we say that $X$ dominates $Y$ in the sense of $n$th-order stochastic dominance on $\R$ ($n{\rm SD}_{\R}$), denoted by $X\ge_{n} Y$ or $F_X\ge_{n} F_Y$ if 
\begin{align*}
F_X^{[n]}(\eta)\le F_Y^{[n]}(\eta),~\forall \eta\in\R
\end{align*}
or equivalently,
\begin{align*}
\E[(\eta-X)_+^{n-1}]\le \E[(\eta-Y)_+^{n-1}],~\forall \eta\in\R.
\end{align*}
\end{definition}

\begin{definition}\label{def:nSD[a,b]}[{\color{black}\citet[page 152]{J80}}]
Let $a,b\in[-\infty,\infty]$ with $a<b$ and $n\in\N$. For $X,Y\in\mathcal X_{[a,b]}$, we say that $X$ dominates $Y$ in the sense of $n$th-order stochastic dominance on $[a,b]$ ($n{\rm SD}_{[a,b]}$), denoted by $X\ge_{n}^{[a,b]} Y$ or $F_X\ge_{n}^{[a,b]} F_Y$ if 
\begin{align*}
F_X^{[n]}(\eta)\le F_Y^{[n]}(\eta),~\forall \eta\in[a,b]~{\rm and}~F_X^{[k]}(b)\le F_Y^{[k]}(b)~{\rm for}~ k\in[n]
\end{align*}
or equivalently,
\begin{align*}
\E[(\eta-X)_+^{n-1}]\le \E[(\eta-Y)_+^{n-1}],~\forall \eta\in[a,b]~{\rm and}~\E[(b-X)^{k-1}]\le \E[(b-Y)^{k-1}]~{\rm for}~ k\in[n].
\end{align*}
\end{definition}

In contrast to $n{\rm SD}_{\R}$, $n{\rm SD}_{[a,b]}$ depends on a reference interval and involves additional boundary conditions that $F_X^{[k]}(b)\le F_Y^{[k]}(b)$ of order $k\in[n]$. For $n\in[4]$, 
$n$SD corresponds to the well-known  first, second, third and fourth-order stochastic dominance. Risk aversion, which includes aversion to mean-preserving spreads, aligns with second-order stochastic dominance as described in \cite{RS70}. Higher orders of stochastic dominance, specifically third and fourth orders, cater to decision makers with more refined risk preferences. Third-order stochastic dominance reflects prudence (\cite{K90}), while fourth-order dominance corresponds to temperance (\cite{K92}). The characterizations of these preferences, as detailed by \cite{ES06}, extend from the traditional concept of mean-preserving spreads to a broader framework of risk apportionment.

% The above definition of $n{\rm SD}_{\R}$ applies to all bounded random variables and does not require any assumptions on their support. In some of the literature, higher-order stochastic dominance is studied under the assumption that random variables are supported on a fixed finite interval 
% $[a,b]$ (see \cite{J80,EST09,N16,BMM20} and the references therein). We provide the corresponding definition below.

% \begin{definition}\label{def:nSD[a,b]}
% Let $a,b\in[-\infty,\infty]$ with $a<b$ and $n\in\N$. For $X,Y\in\mathcal X_{[a,b]}$, we say that $X$ dominates $Y$ in the sense of $n$th-order stochastic dominance on $[a,b]$ ($n{\rm SD}_{[a,b]}$), denoted by $X\ge_{n}^{[a,b]} Y$ or $F_X\ge_{n}^{[a,b]} F_Y$ if 
% \begin{align*}
% F_X^{[n]}(\eta)\le F_Y^{[n]}(\eta),~\forall \eta\in[a,b]~{\rm and}~F_X^{[k]}(b)\le F_Y^{[k]}(b)~{\rm for}~ k\in[n]
% \end{align*}
% or equivalently,
% \begin{align*}
% \E[(\eta-X)_+^{n-1}]\le \E[(\eta-Y)_+^{n-1}],~\forall \eta\in[a,b]~{\rm and}~\E[(b-X)^{k}]\le \E[(b-Y)^{k}]~{\rm for}~ k\in[n-1].
% \end{align*}
% \end{definition}

For two random variables $X,Y\in\mathcal X_{[a,b]}$,  both $n{\rm SD}_{\R}$ and
$n{\rm SD}_{[a,b]}$ can be used to rank their order. This raises a natural question of whether these ranking relations are consistent.  
% By consistency, we mean that for any $X,Y\in\mathcal X_{[a,b]}$, the ranking relation between remains unchanged when using $n{\rm SD}_{\R}$ versus
% $n{\rm SD}_{[a,b]}$. 
To examine this, we define two sets of utility functions that 
%satisfies stochastic-dominance preference of order $n$ 
are regular $n$-increasing concave (\cite{DE13}) over different domains:\footnote{
%$\mathcal U_n$ is the set of all stochastic-dominance preference of order 
From the definition in \cite{L14}, the utility functions in $\mathcal U_n$ exhibit $k$th-degree risk aversion for $k \in [n]$. Letting $n \to \infty$, $\mathcal U_{\infty}$ becomes the set of all completely monotone functions, which is well studied in the mathematics literature and is closely linked to Laplace–Stieltjes transforms (see, e.g., \cite{S38}). In this case, utility functions express mixed risk aversion, as discussed in \cite{CP96}.}
\begin{align*}
\mathcal U_{n}^{[a,b]}=\left\{u:\R\to\R \mid \mbox{$(-1)^{k-1}u^{(k)}\ge 0$ on $[a,b]$ for all $k\in[n]$}\right\}
\end{align*}
and
\begin{align*}
\mathcal U_{n}:=\mathcal U_{n}^{[-\infty,\infty]}=\left\{u:\R\to\R \mid \mbox{$(-1)^{k-1}u^{(k)}\ge 0$ on $\R$ for all $k\in[n]$}\right\}.
\end{align*}
% We denote the closure of $\mathcal U_{n}^{[a,b]}$ and $\mathcal U_{n}$ by $\overline{\mathcal U_{n}^{[a,b]}}$ and $\mathcal U_{n}$

Denote $\overline{\mathcal U}_{n}^{[a,b]}$ as the closure of $\mathcal U_{n}^{[a,b]}$ with respect to pointwise convergence. This gives the class of all the utilities such that $(-1)^{k-1}u^{(k)}\ge 0$ for $k\in[n-2]$ and $(-1)^{n-2}u^{(n-2)}$ is increasing and concave on $[a,b]$.
The following proposition provides an answer to the above question of consistency and reveals that the answer is negative for $n\ge 4$.

\begin{proposition}\label{th-main}\label{prop-main}
Let $a,b\in\R$ with $a<b$ and $n\in\N$. For $X,Y\in\mathcal X_{[a,b]}$, we have
\begin{align*}
&\mbox{(i) $X\ge_n^{[a,b]}Y$} \hspace{-0.3cm} & \iff  & \mbox{(ii) $\E[u(X)]\ge \E[u(Y)]$ $\forall u\in\mathcal U_n^{[a,b]}$} \hspace{-0.3cm} & \iff & \mbox{(iii) $\E[u(X)]\ge \E[u(Y)]$ $\forall u\in\overline{\mathcal U}_n^{[a,b]}$}   ; \\  
&\mbox{(iv) $X\ge_nY$} \hspace{-0.3cm}  & \iff  & \mbox{(v) $\E[u(X)]\ge \E[u(Y)]$ $\forall u\in\mathcal U_n$} \hspace{-0.3cm} &\iff  &\mbox{(vi) $\E[u(X)]\ge \E[u(Y)]$ $\forall u\in\overline{\mathcal U}_n$} ;
\end{align*}
% \begin{enumerate}[(i)]
% \item $X\ge_n^{[a,b]}Y$,
% \item $\E[u(X)]\ge \E[u(Y)]$ for all $u\in\mathcal U_n^{[a,b]}$,
% %$u:\R\to\R$ satisfying $(-1)^{k-1}u^{k}\ge 0$ on $[a,b]$ for all $k\in[n]$,
% \item $X\ge_nY$,
% \item $\E[u(X)]\ge \E[u(Y)]$ for all $u\in\mathcal U_n$.
% %$u:\R\to\R$ satisfying $(-1)^{k-1}u^{k}\ge 0$ for all $k\in[n]$.
% \end{enumerate}
and  (i) $\Rightarrow$ (iv) always holds true.  But (iv) $\Rightarrow$ (i) holds for all $X,Y\in \X_{[a,b]}$ if and only if $n\le 3$.
\end{proposition}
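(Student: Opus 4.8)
The plan is to prove the two parts separately. For the chain of equivalences in (i)--(iii) and in (iv)--(vi), I would use the integral representation $F_X^{[n+1]}(\eta) = \frac{1}{n!}\E[(\eta-X)_+^n]$ already recorded in the excerpt, together with the classical fact that the functions $u \in \mathcal U_n^{[a,b]}$ (resp.\ $\mathcal U_n$) can be represented, up to adding lower-degree polynomials, as mixtures of the ``building block'' functions $x \mapsto -(\eta - x)_+^{n-1}$ over $\eta$ in the relevant range. Concretely, for (i)$\iff$(ii) I would first check that each $u$ of the form $x\mapsto -(\eta-x)_+^{n-1}$ with $\eta\in[a,b]$ lies in $\mathcal U_n^{[a,b]}$ (this uses $\eta\le b$ so the kink sits inside the interval and the relevant derivatives have the right signs on $[a,b]$), and that the monomials $x\mapsto -(b-x)^{k-1}$, $k\in[n]$, and their negatives arise as limits/boundary cases, giving (ii)$\Rightarrow$(i); conversely a Taylor/Choquet-type decomposition of an arbitrary $u\in\mathcal U_n^{[a,b]}$ into these pieces with nonnegative weights yields (i)$\Rightarrow$(ii). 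The equivalence (ii)$\iff$(iii) is then immediate from the definition of $\overline{\mathcal U}_n^{[a,b]}$ as the pointwise closure, since expectations of bounded random variables are preserved under dominated pointwise limits (all variables live in the compact $[a,b]$, so boundedness is automatic). The same argument verbatim, with $b$ replaced by $+\infty$ and the boundary monomials dropped, handles (iv)--(vi).

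For the implication (i)$\Rightarrow$(iv): this is just the inclusion $\mathcal U_n \subseteq \mathcal U_n^{[a,b]}$ (a function completely alternating on all of $\R$ is in particular so on $[a,b]$), combined with the already-established equivalences (i)$\iff$(ii) and (iv)$\iff$(v); so statement (ii) for the larger class trivially implies statement (v) for the smaller one. I would state this in one line.

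The substantive part is: (iv)$\Rightarrow$(i) for all $X,Y\in\X_{[a,b]}$ if and only if $n\le 3$. For the ``if'' direction ($n\le 3$), the key observation is that when $n\le 3$ the boundary conditions in $n{\rm SD}_{[a,b]}$ are automatically implied by $X\ge_n Y$: indeed $X\ge_n Y$ gives $F_X^{[n]}(\eta)\le F_Y^{[n]}(\eta)$ for \emph{all} $\eta\in\R$, in particular on $[a,b]$, so one only needs $F_X^{[k]}(b)\le F_Y^{[k]}(b)$ for $k\in[n]$; and since $X,Y\in\X_{[a,b]}$ we have $F_X^{[1]}(b)=F_Y^{[1]}(b)=1$, while $F_X^{[2]}(b)=b-\E[X]$ and $F_X^{[3]}(b)=\frac12\E[(b-X)^2]$, and for $k=2,3$ these boundary quantities can be read off from the values $F_X^{[k]}(\eta)$, $F_Y^{[k]}(\eta)$ at $\eta$ slightly beyond the support (or via the moment identities), and are controlled by $X\ge_n Y$ exactly as explained in the footnote quoting \cite{F80}. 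More carefully: for $k=2$, $F_X^{[2]}$ and $F_Y^{[2]}$ are both eventually linear with slope $1$, so $F_X^{[2]}(b)-F_Y^{[2]}(b)=\lim_{\eta\to\infty}(F_X^{[2]}(\eta)-F_Y^{[2]}(\eta))\le 0$; for $k=3$ (needed only when $n=3$), $F_X^{[3]}(b)\le F_Y^{[3]}(b)$ follows directly from $X\ge_3 Y$ evaluated at $\eta=b$. Hence $X\ge_n^{[a,b]}Y$. For the ``only if'' direction ($n\ge 4$), I would invoke Example~\ref{ex-counter}: the explicit pair $F=(8/9+\epsilon)\delta_{2/9}+(1/9-\epsilon)\delta_1$, $G=\delta_0/3+2\delta_{4/9}/3$ with $\epsilon=1/100$, both supported in $[0,1]$, satisfies $F\ge_4 G$ but not $F\ge_4^{[0,1]}G$ (the fourth-order boundary condition $F^{[4]}(1)\le G^{[4]}(1)$ fails); checking this is a finite moment computation. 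For general $n\ge 4$ one can either exhibit an analogous example or reduce to $n=4$ by a standard padding argument. The main obstacle is making the ``only if'' direction fully general in $n$ rather than just $n=4$: I would either present the $n=4$ counterexample and remark that it extends, or construct, for each $n\ge 4$, a two-atom-vs-two-atom pair on $[a,b]$ with matching moments up to order $n-2$ but with $F^{[n]}(b)>G^{[n]}(b)$, which is possible precisely because for $n\ge 4$ there is enough freedom (the $(n-1)$-st and $n$-th moment/boundary constraints decouple from the dominance-on-$[a,b]$ constraints). I expect the cleanest writeup routes everything through the equivalent utility-class formulations: (iv)$\not\Rightarrow$(i) for $n\ge4$ is the statement that $\mathcal U_n^{[a,b]}\setminus \overline{\mathcal U}_n$ contains a utility $u$ with $\E[u(X)]<\E[u(Y)]$ for a suitable $X\ge_n Y$, and the natural candidate is $u(x)=-(b-x)^{n-1}$ (or a nearby function), which is in $\mathcal U_n^{[a,b]}$ but whose global version on $\R$ fails the alternating-sign condition for $n\ge 4$.
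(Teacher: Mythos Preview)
Your overall strategy matches the paper's: route the equivalences through the utility classes, get (i)$\Rightarrow$(iv) from $\mathcal U_n\subseteq\mathcal U_n^{[a,b]}$, handle $n\le 3$ by checking the boundary conditions directly, and use Example~\ref{ex-counter} for $n\ge 4$. The paper likewise cites \cite{EST09} for (i)$\Leftrightarrow$(ii) and proves (iv)$\Rightarrow$(v) by integration by parts (equivalently, the Williamson/Choquet decomposition you describe). Two points need tightening, however.

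\emph{The $n=3$ boundary condition.} You write that $F_X^{[2]}(b)-F_Y^{[2]}(b)=\lim_{\eta\to\infty}(F_X^{[2]}(\eta)-F_Y^{[2]}(\eta))\le 0$, but you never justify the final inequality. The hypothesis $X\ge_3 Y$ controls $F_X^{[3]}-F_Y^{[3]}$, not $F_X^{[2]}-F_Y^{[2]}$; the step you are missing is precisely that $X\ge_n Y$ forces $\E[X]\ge\E[Y]$. The paper isolates this as Lemma~\ref{lm-OR01} (from \cite{OR01}): since $F_X^{[3]}(\eta)-F_Y^{[3]}(\eta)=\eta(\E[Y]-\E[X])+\tfrac12(\E[X^2]-\E[Y^2])\le 0$ for all large $\eta$, the leading coefficient must be nonpositive. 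Without this lemma (or its one-line asymptotic proof) your argument for the $k=2$ boundary condition is incomplete.

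\emph{The counterexample for $n\ge 4$.} You misidentify which boundary condition fails: in Example~\ref{ex-counter} it is the $k=3$ condition $F_X^{[3]}(1)\le F_Y^{[3]}(1)$, i.e.\ $\E[(1-X)^2]\le\E[(1-Y)^2]$, that breaks---not $F^{[4]}(1)\le G^{[4]}(1)$, which in fact holds because $X\ge_4 Y$ evaluated at $\eta=1$. This matters for your ``general $n\ge 4$'' reduction: the paper's padding argument is exactly that the $k=3$ violation $\E[(b-X)^2]>\E[(b-Y)^2]$ survives affine rescaling and, together with $X\ge_4 Y\Rightarrow \widetilde X\ge_n\widetilde Y$, rules out $\widetilde X\ge_n^{[c,d]}\widetilde Y$ for every $n\ge 4$. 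Your alternative proposal (build a pair with matching moments up to order $n-2$) would work but is unnecessary; the single $n=4$ example carries all larger $n$ once you track the correct boundary index.
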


The equivalence of (i) and (ii) was well-established; see e.g., \cite{EST09}, \cite{DE13} and Theorem 3.6 of \cite{L15}. Specifically, the implication (i) $\Rightarrow$ (ii) can be shown by using integration by parts. Note that (ii) $\Rightarrow$ (v) is trivial. Once the equivalence of (iv) and (v) is established, it follows that (i) $\Rightarrow$ (iv) holds. This implication suggests that $n{\rm SD}{[a,b]}$ provides a more stringent criterion than $n{\rm SD}_{\R}$ when comparing random variables defined over the space $\mathcal X_{[a,b]}$.
Additionally, Proposition \ref{th-main} demonstrates that a prudent decision maker's preferences remain consistent whether employing $3{\rm SD}_{[a,b]}$ or $3$SD. Consequently, this allows for the uniform application of $3$SD to rank random variables.
However, preferences of a temperate decision maker may vary when transitioning from the criterion $4{\rm SD}$ to $4{\rm SD}_{[a,b]}$. 

{\color{black} A direct explanation for why the implication from (iv) to (i) breaks when the order changes from 3 to 4 is provided below. Note that $n{\rm SD}_{[a,b]}$ involves additional boundary conditions compared to $n{\rm SD}_{\R}$. For $n=3$, suppose that $X\ge_{n}Y$. To check whether $X$ dominates $Y$ under $n{\rm SD}_{[a,b]}$, it is sufficient to verify only one boundary condition: $\E[(b-X)]\le \E[(b-Y)]$, which is equivalent to 
$\E[X]\ge \E[Y]$. This comparison of expectations can be implied from $X\ge_{n}Y$. However, for $n=4$, the boundary condition of order $2$ arises: $\E[(b-X)^2]\le \E[(b-Y)^2]$. In this case, $X\ge_{n}Y$ is not sufficient to establish the boundary condition. 
To illustrate this, 
we present an example involving two random variables $X,Y\in \mathcal{X}_{[0,1]}$ such that $X \geq_4 Y$ and $\E[(1-X)^2]> \E[(1-Y)^2]$, serving as a counterexample to the implication (iv) $\Rightarrow$ (i) in Proposition \ref{th-main} for $n=4$.
}

{\color{black}
\begin{example}\label{ex-counter}
Let $X,Y\in\mathcal X_{[0,1]}$ with 
\begin{align*}
F_X=\left(\frac{8}{9}+\epsilon\right)\delta_{2/9}+\left(\frac{1}{9}-\epsilon\right)\delta_{1}~~{\rm and}~~F_Y=\frac13\delta_0+\frac23\delta_{4/9},~~{\rm where}~\epsilon=\frac{1}{100}.
\end{align*}
Below we consider the rank of $X,Y$ under fourth-order stochastic dominance with different reference intervals. First, we consider $[0,1]$ as the reference interval.
We have the conclusion that $X\not\ge_{4}^{[0,1]}Y$ as the following boundary condition with $k=3$ in Definition \ref{def:nSD[a,b]} is violated:
\begin{align*}
\E\left[\left(1 -X\right)^2\right]-\E\left[\left(1 -Y\right)^2\right]
=\frac{341}{72900}>0.
\end{align*}
Next, the reference interval is set as the entire real line, meaning that we consider fourth-order stochastic dominance as defined in Definition \ref{def:nSD}.
%Hence, $X\not\ge_{4}^{[0,1]}Y$. 
%Denote by $f(\eta):=\E[\left(\eta -Y\right)_+^3]-\E[\left(\eta -X\right)_+^3]$ for $\eta\in\R$.
\begin{figure}[h!]
    \centering
    \includegraphics[width=0.6\textwidth]{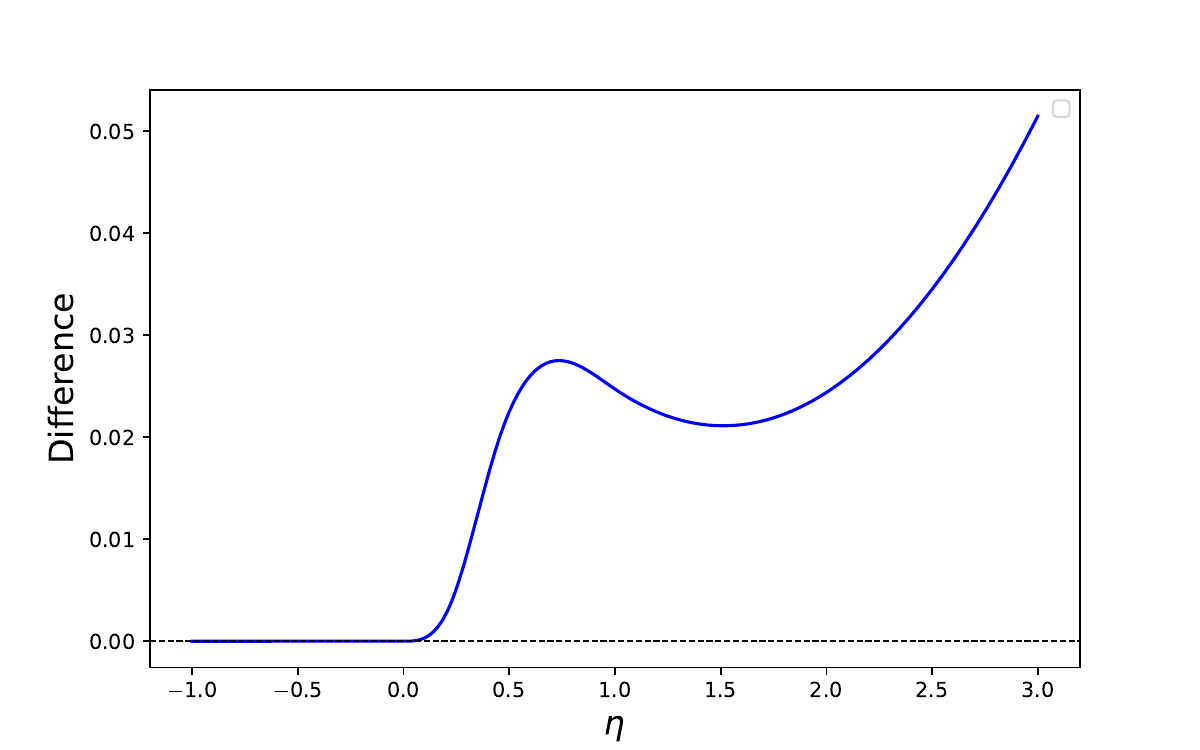} % Adjust width as needed
    \vspace{-1em} % Reduce space between the title and the figure
    \caption{$\E\big[(\eta - Y)_+^3\big] - \E\big[(\eta - X)_+^3\big]$}
    \label{fig-ex1}
\end{figure}
By standard calculation, one can check that $\E[\left(\eta -X\right)_+^3]\le\E[\left(\eta -Y\right)_+^3]$ for all $\eta\in \R$ (see Figure \ref{fig-ex1} for an intuitive illustration), which implies that {\color{black} $X\ge_{4}Y$}. Finally, we set $[0,2]$ as the reference interval and aim to establish that $X\ge_{4}^{[0,2]}Y$. Note that $\E[\left(\eta -X\right)_+^3]\le\E[\left(\eta -Y\right)_+^3]$ for all $\eta\in \R$. By Definition \ref{def:nSD[a,b]}, it remains to verify the boundary condition with $k\in\{2,3\}$. By standard calculation, we have
\begin{align*}
\E\left[\left(2 -X\right)\right]-\E\left[\left(2 -Y\right)\right]
=-\frac{37}{8100}<0~~{\rm and}~~
\E\left[\left(2 -X\right)^2\right]-\E\left[\left(2 -Y\right)^2\right]
=-\frac{13}{2916}<0.
\end{align*}
Hence, the relation {\color{black} $X\ge_{4}^{[0,2]}Y$} holds.
\end{example}
}

% The next corollary is a direct result of Proposition \ref{th-main}.

% \begin{corollary}\label{co-nSD}
% Let $a,b,c\in(-\infty,\infty)$ with $a<b<c$ and $n\in\N$. For $X,Y\in\mathcal X_{[a,b]}$, $X\ge_n^{[a,b]}Y$ implies $X\ge_n^{[a,c]}Y$. Moreover, the converse is true if $n\le 3$ and fails otherwise.
% \end{corollary}

% Corollary \ref{co-nSD} demonstrates that the ranking of two random variables may be inconsistent when $n$th-order stochastic dominance is applied across different intervals in the cases $n \ge 4$. 

The next result examines the consistency of $n{\rm SD}$ when applied across various intervals. Notably, $n{\rm SD}_{[a,b]}$ does not depend on $a$ as long as $a$ is smaller than the left endpoint of the support of the random variables to compare. For simplicity, we assume that the left endpoint of all intervals is the same in the following theorem.

\begin{theorem}\label{prop-nSD}\label{th-nSD}
Fix $a,b,c,d\in\R$ with $a<b\le c<d$ and $n\in\N$.
\begin{align}
    \label{eq:implies} 
\mbox{For  all $X,Y\in\mathcal X_{[a,b]}$}:  X\ge_n^{[a,c]}Y ~\Longrightarrow~ 
 X\ge_n^{[a,d]}Y.
\end{align} 
The backward implication of \eqref{eq:implies} holds
if and only if  $n\le 3$.
\end{theorem}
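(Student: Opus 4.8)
The forward implication \eqref{eq:implies} is essentially free: by the characterization in Proposition \ref{th-main}, $X\ge_n^{[a,c]}Y$ is equivalent to $\E[u(X)]\ge\E[u(Y)]$ for all $u\in\mathcal U_n^{[a,c]}$, and since $[a,c]\supseteq[a,d]$ forces $\mathcal U_n^{[a,c]}\subseteq\mathcal U_n^{[a,d]}$... wait, the direction is the other way: a larger domain gives a \emph{smaller} class of utilities, so $\mathcal U_n^{[a,d]}\subseteq\mathcal U_n^{[a,c]}$, hence $X\ge_n^{[a,c]}Y\Rightarrow X\ge_n^{[a,d]}Y$. Equivalently, one can argue directly from the $F^{[n]}$ and boundary-condition formulation: the pointwise inequality $F_X^{[n]}\le F_Y^{[n]}$ on $[a,c]$ plus the boundary conditions at $c$ propagate the same inequalities out to $[c,d]$ and give the boundary conditions at $d$, by the standard Taylor/integration argument used in the proof of Fishburn's remark quoted in the introduction. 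The first task is to write this cleanly.

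For the case $n\le 3$, I would prove the backward implication $X\ge_n^{[a,d]}Y\Rightarrow X\ge_n^{[a,c]}Y$ directly. Given the dominance on $[a,d]$ we already have $F_X^{[n]}\le F_Y^{[n]}$ on $[a,c]$, so only the boundary conditions at $c$ need checking: $F_X^{[k]}(c)\le F_Y^{[k]}(c)$ for $k\in[n]$. For $k=1$ this is $F_X(c)\le F_Y(c)$, which holds since both sides equal the value at $a$ integrated up and the FSD-type inequality at order $1$ on $[a,d]$ restricts to $[a,c]$; more simply $F_X^{[1]}(c)\le F_Y^{[1]}(c)$ is part of the hypothesis on $[a,c]$. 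For $k=2$, $F_X^{[2]}(c)=F_X^{[2]}(d)-\int_c^d F_X^{[1]}$, and using $F_X^{[2]}(d)\le F_Y^{[2]}(d)$ together with $F_X^{[1]}\ge F_Y^{[1]}$... this needs the right sign bookkeeping, so the clean route is again the utility reformulation: for $n\le 3$ one shows $\mathcal U_n^{[a,c]}$ is, on the support $[a,b]$ of the relevant random variables, generated (in the sense of expectations) by $\mathcal U_n^{[a,d]}$ together with functions whose restriction to $[a,b]$ is already controlled — the key point being that for $n\le 3$ the only extra boundary condition beyond those automatically implied is the comparison of means, which is itself implied. This is exactly the mechanism explained in the black-text paragraph after Example \ref{ex-counter}, and I would lean on it.

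For the ``only if'' direction ($n\ge 4$ breaks the backward implication), the natural move is to adapt Example \ref{ex-counter}: there $X,Y\in\mathcal X_{[0,1]}$ satisfy $X\ge_4^{[0,2]}Y$ (indeed $X\ge_4 Y$) but $X\not\ge_4^{[0,1]}Y$ because the order-$3$ boundary condition $\E[(1-X)^2]\le\E[(1-Y)^2]$ fails. Taking $a=0$, $b=1$, $c=1$ — wait, we need $b\le c$, so $c=1$, $d=2$ works with $b=1$: this is literally the example. For general $n\ge 4$ I would either (a) embed the $n=4$ example by composing the test utilities, or (b) construct directly a pair $X,Y\in\mathcal X_{[a,b]}$ with $F_X^{[n]}\le F_Y^{[n]}$ on $[a,d]$ and all boundary conditions at $d$, but with $\E[(c-X)^{k-1}]>\E[(c-Y)^{k-1}]$ for some $k\le n$; the degrees of freedom in choosing a few atoms make this a finite linear-feasibility problem, and Example \ref{ex-counter} shows it is solvable for $n=4$. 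The main obstacle is the general-$n$ construction (or reduction): one must ensure the global inequality $\E[(\eta-X)_+^{n-1}]\le\E[(\eta-Y)_+^{n-1}]$ holds for \emph{all} $\eta\in\R$ while violating a boundary moment condition at the interior endpoint $c$, and verifying the former for all $\eta$ — rather than on a bounded window — is the delicate part; I expect to handle it by a scaling/truncation argument reducing to the $n=4$ case, or by an explicit two- or three-atom family with parameters tuned as in the example.
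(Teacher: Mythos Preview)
Your handling of the forward implication and the $n\le 3$ backward direction is fine and matches the paper: the utility-class inclusion $\mathcal U_n^{[a,d]}\subseteq\mathcal U_n^{[a,c]}$ gives \eqref{eq:implies}, and for $n\le 3$ the only non-automatic boundary condition at $c$ is the mean comparison, which is forced by Lemma~\ref{lm-OR01}. The paper routes this through Proposition~\ref{th-main} (for $n\le 3$, $\ge_n^{[a,d]}\Rightarrow\ge_n\Rightarrow\ge_n^{[a,c]}$), which is exactly your mechanism.

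The genuine gap is in the $n\ge 4$ counterexample for \emph{arbitrary} $a<b\le c<d$. Example~\ref{ex-counter} covers only $b=c=1$, $d=2$, and your proposed ``scaling/truncation'' reduction does not extend it. Here is why. For $X,Y$ supported in $[a,b]$ with $\E[X]>\E[Y]$, the function $t\mapsto\E[(t-X)^2]-\E[(t-Y)^2]$ is affine with zero at $t^*=\tfrac12\,\dfrac{\E[X^2]-\E[Y^2]}{\E[X]-\E[Y]}$; to get $\ge_4^{[a,d]}$ but $\not\ge_4^{[a,c]}$ you need $c<t^*\le d$. Affine rescaling of Example~\ref{ex-counter} into $[a,b]$ produces a \emph{fixed} $t^*$ depending only on $a,b$; further shrinking by $\gamma\in(0,1]$ (keeping the left endpoint at $a$) only decreases $t^*$, and translating within $[a,b]$ can push $t^*$ up to at most roughly $b$. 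Since the theorem allows $c$ arbitrarily larger than $b$, no affine manipulation of a single example can place $t^*$ above $c$.

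What the paper does, and what you are missing, is precisely this flexibility: Lemma~\ref{lm-prop1} builds a \emph{sequence} $\{X_n,Y_n\}\subseteq\mathcal X_{[a,b]}^2$ with $X_n\ge_4 Y_n$, $\E[X_n]>\E[Y_n]$, and ratio $\lambda_n:=(\E[X_n^2]-\E[Y_n^2])/(\E[X_n]-\E[Y_n])\to\infty$. Picking $n$ with $\lambda_n\ge 2d$ and then shrinking by $\gamma\in(2c/\lambda_n,\,2d/\lambda_n]\subseteq(0,1]$ lands $t^*$ in $(c,d]$. The construction of this sequence is the non-trivial step: it is a one-parameter deformation of Example~\ref{ex-counter} (atoms at $2$, $8+m_n$ versus $0$, $4$, with $\epsilon_n$ coupled to $m_n$) for which the verification of $X_n\ge_4 Y_n$ for all $\eta\in\R$ requires a careful piecewise analysis of a cubic. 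Your phrase ``explicit two- or three-atom family with parameters tuned'' is pointing in the right direction, but you have not identified \emph{what} the tuning must achieve --- namely, that the second-moment gap can be made arbitrarily large relative to the mean gap while preserving $\ge_4$ and the fixed support $[a,b]$.
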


Theorem \ref{prop-nSD} illustrates that the ranking of two random variables can be inconsistent when applying $n$SD across different intervals when $n\ge 4$. Specifically, when two stochastic dominance relations, defined over intervals $A$ and $B$ such that the right endpoint of $B$ exceeds that of $A$, this inconsistency arises. Importantly, such inconsistencies occur even when only considering random variables whose support is confined to a sub-interval of $A\cap B$, not necessarily the entire intersection.

In practice, the exact interval that bounds all possible values of wealth may not be known, and decision makers typically set a sufficiently large range based on historical data.   Suppose that there are two risk analysts using 4SD to rank the stock returns in one year. 
One chooses $[ -100\%,2000\%]$ as the reference interval, and one chooses $[ -100\%,1000\%]$ as the reference interval. 
Consider two stock returns, denoted by $X$ and $Y$, evaluated based on their historical performance, both taking values between $[-1,1]$. The first analyst may conclude that $X$ dominates $Y$ in 4SD,
and the second may conclude that the domination does not hold.  
In this example, although both analysts choose very large upper bound $b$ for the interval that surely contains all possible values of $X$ and $Y$, it is unclear which value of $b$ is the right one to choose, and this subjective choice affects their conclusion on domination.
In extreme scenarios, where a new observation shows that the upper bound $b$ is not large enough to cover all risks of interest, the analysts must enlarge their interval, and may arrive at different domination relations even for those return variables that are within the originally chosen interval.
%Suppose that these stock returns take values in $[a, \epsilon]$, where $\epsilon < b$ might be very small, and $X$ and $Y$ are ordered by $n{\rm SD}_{[a,b]}$. If we alternatively select a smaller upper limit $b_1$ with $\epsilon<b_1 < b$, which still encompasses all observed returns, then according to Theorem \ref{prop-nSD}, $X$ and $Y$ may not be comparable under the criterion $n{\rm SD}_{[a,b_1]}$.
%Next, consider an extreme scenario where the wealth associated with a stock, denoted by $b_2$, becomes exceedingly large—exceeding $b$—we would need to extend the upper boundary of the interval to $b_2$. Such an adjustment, as demonstrated by Theorem \ref{prop-nSD}, may allow a pair of stocks that cannot be compared under $n{\rm SD}_{[a,b_1]}$ to be compared under $n{\rm SD}_{[a,b_2]}$.

%These scenarios illustrate practical considerations for decision makers in selecting evaluation intervals. If a decision maker is confident about the upper limit of possible returns, a narrower upper bound could lead to a more efficient application of stochastic dominance rules. Conversely, to accommodate potential extreme outcomes, a decision maker might opt to consistently use ${n}{\rm SD}_{\R}$, which does not restrict the analysis to a specific upper limit.

Proposition \ref{prop-main} can be generalized to include a broader category of stochastic dominance rules known as $n$th degree $m$-mean preserving stochastic dominance (\cite{L14}), denoted as $(n,m){\rm SD}_{[a,b]}$. Specifically, for $X, Y \in \mathcal X_{[a,b]}$, $X$ dominates $Y$ in $(n,m){\rm SD}_{[a,b]}$ if $X \ge_n^{[a,b]} Y$ and $F_X^{[k]}(b) \le F_Y^{[k]}(b)$ for all $k \in [n]$, with equality holding for all $k \in [m+1]$. This concept can be extended to the set of all bounded random variables, denoted as $(n,m){\rm SD}_{\R}$, where $X$ dominates $Y$ if $X \ge_n Y$ and $\E[X^k] = \E[Y^k]$ for each $k \in [m]$. The higher-order stochastic dominance is a particular instance of $n$th degree $m$-mean preserving stochastic dominance with $m=0$. If $m=1$, this dominance criterion corresponds to $n$th degree mean-preserving stochastic dominance as introduced in \cite{DE13}. When $m = n-1$, it aligns with the notion of $n$th degree risk increase as originally defined by \cite{E80}.

For the above two formulations of $n$th degree $m$-mean preserving stochastic dominance, we have the following result about the consistency that extends Proposition \ref{prop-main}.

\begin{theorem}\label{prop-mainMN}\label{th-mainMN}
Let $a,b\in\R$ with $a<b$ and $m,n\in\N$ with $m\le n-1$. 
\begin{align}
    \label{eq:implies2} 
\mbox{For  all $X,Y\in\mathcal X_{[a,b]}$}:  \mbox{ $X$ dominates $Y$ in $(n,m){\rm SD}_{[a,b]}$}  ~\Longrightarrow~ 
\mbox{$X$ dominates $Y$ in $(n,m){\rm SD}_{\R}$}.
\end{align} 
 The backward implication of \eqref{eq:implies2}  holds if and only if $n-m\le 3$.
\end{theorem}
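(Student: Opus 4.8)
The plan is to reduce Theorem \ref{th-mainMN} to Proposition \ref{th-main} by carefully tracking the role of the extra moment-equality constraints. First I would establish the forward implication \eqref{eq:implies2}: if $X$ dominates $Y$ in $(n,m){\rm SD}_{[a,b]}$, then in particular $X\ge_n^{[a,b]}Y$, so by Proposition \ref{th-main}(i)$\Rightarrow$(iv) we get $X\ge_n Y$; and the equalities $F_X^{[k]}(b)=F_Y^{[k]}(b)$ for $k\in[m+1]$ translate, via the identity $F_X^{[k+1]}(b)=\frac{1}{k!}\E[(b-X)^k]$ (valid since $X,Y$ are supported in $[a,b]$, so the positive part can be dropped), into $\E[(b-X)^k]=\E[(b-Y)^k]$ for $k\in[m]$, which is equivalent by a triangular linear change of variables to $\E[X^k]=\E[Y^k]$ for $k\in[m]$. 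Hence $X$ dominates $Y$ in $(n,m){\rm SD}_{\R}$, as required.

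Next I would prove the backward implication when $n-m\le 3$. Assume $X$ dominates $Y$ in $(n,m){\rm SD}_{\R}$, i.e.\ $X\ge_n Y$ and $\E[X^k]=\E[Y^k]$ for $k\in[m]$; I must show $X\ge_n^{[a,b]}Y$ together with the stronger boundary conditions. The moment equalities already give $F_X^{[k+1]}(b)=F_Y^{[k+1]}(b)$ for $k\in[m]$, i.e.\ $F_X^{[k]}(b)=F_Y^{[k]}(b)$ for $k\in\{2,\dots,m+1\}$, and $F_X^{[1]}(b)=F_Y^{[1]}(b)=1$ trivially, so all the equality boundary conditions for $k\in[m+1]$ hold automatically. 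It remains to verify $F_X^{[k]}(b)\le F_Y^{[k]}(b)$ for the remaining orders $k\in\{m+2,\dots,n\}$ (there are at most $n-m-1\le 2$ of them) and $F_X^{[n]}(\eta)\le F_Y^{[n]}(\eta)$ for $\eta\in[a,b]$. The latter pointwise inequality on $[a,b]$ is immediate from $X\ge_n Y$, so the crux is the at-most-two remaining boundary inequalities. For $k=m+2$: since $\E[X^k]=\E[Y^k]$ for all $k< m+1$, the difference $F_Y^{[m+2]}(b)-F_X^{[m+2]}(b)$ is, up to a positive constant, $\E[(b-Y)^{m+1}]-\E[(b-X)^{m+1}]$, which after expanding and cancelling all lower moments equals $(-1)^{m+1}\bigl(\E[Y^{m+1}]-\E[X^{m+1}]\bigr)$ up to sign; this single leftover moment comparison should follow from $X\ge_n Y$ exactly as in the discussion after Proposition \ref{th-main} — one uses that integrating $F_X^{[n]}\le F_Y^{[n]}$ against suitable test functions, or equivalently evaluating $\E[u(X)]\ge\E[u(Y)]$ for $u\in\mathcal U_n$ with $u(x)=\pm x^{m+1}$ when $m+1\le n-2$ (so that $u\in\overline{\mathcal U}_n$, since a polynomial of degree $\le n-2$ with the right sign of the $(m+1)$th derivative lies in the closure), pins down the required inequality; the order-$(m+3)$ condition, if present, is handled the same way with $u(x)=\pm x^{m+2}$, which needs $m+2\le n-2$, i.e.\ $n-m\ge 4$ — wait, this is the boundary case, so I would instead derive the order-$n$ boundary condition $F_X^{[n]}(b)\le F_Y^{[n]}(b)$ directly from the pointwise inequality at $\eta=b$, and the only genuinely new inequalities are those of order strictly between $m+1$ and $n$, each of which reduces to a single polynomial-expectation comparison of degree $\le n-2$ that is in $\overline{\mathcal U}_n$ or $-\overline{\mathcal U}_n$ precisely because $n-m\le 3$.

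Finally I would show the backward implication fails when $n-m\ge 4$ by exhibiting a counterexample, most naturally by "lifting" Example \ref{ex-counter}. The idea: given the pair $X_0,Y_0\in\mathcal X_{[0,1]}$ from Example \ref{ex-counter} witnessing $X_0\ge_4 Y_0$ but $\E[(1-X_0)^2]>\E[(1-Y_0)^2]$ (so $X_0\not\ge_4^{[0,1]}Y_0$), I want a pair $X,Y\in\mathcal X_{[a,b]}$ with $X\ge_n Y$, matching first $m$ moments, but failing the order-$(m+2)$ boundary condition with $n=m+4$. One constructs $X,Y$ so that their "$m$th-order mean-preserving residual profile" mimics $X_0,Y_0$: concretely, look for $X,Y$ whose measures $\mu_X-\mu_Y$ integrate to $0$ against $1,x,\dots,x^{m}$ and against all $(\eta-x)_+^{n-1}$, but not against $(b-x)^{m+1}$. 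Since $n-1 = m+3$, the function $(\eta-x)_+^{n-1}$ is $C^{m+1}$, and the relevant linear functionals are governed by the $(m+1)$th and higher behavior — exactly the regime where Example \ref{ex-counter} already lives after shifting indices by $m$. I expect the main obstacle to be this last step: producing an explicit $(n,m)$ counterexample with clean rational data rather than merely asserting existence via a dimension/duality argument; the cleanest route is probably to take $X=X_0+Z$ and $Y=Y_0+Z$ for an independent auxiliary $Z$ chosen so that the convolution preserves the order-$4$ residual structure of $(X_0,Y_0)$ while injecting $m$ extra matched moments, or alternatively to invoke the characterization (i)$\iff$(ii) of Proposition \ref{th-main} together with a direct construction of a signed measure in the appropriate moment cone, and then verify the finitely many inequalities by the same standard calculation used in Example \ref{ex-counter}.
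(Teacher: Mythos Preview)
Your forward implication and the overall structure for the case $n-m\le 3$ are fine, but the key step contains a genuine error. You claim that $u(x)=\pm x^{m+1}$ lies in $\overline{\mathcal U}_n$ whenever $m+1\le n-2$, and more generally that ``a polynomial of degree $\le n-2$ with the right sign of the $(m+1)$th derivative lies in the closure.'' This is false on $\R$: for instance with $n=4$, $m=1$ you would need $u(x)=\pm x^2$, but $u'(x)=\pm 2x$ does not have a sign on $\R$, so neither is in $\overline{\mathcal U}_4$. The required moment inequality $(-1)^{m+1}\E[X^{m+1}]\le(-1)^{m+1}\E[Y^{m+1}]$ does follow from $X\ge_n Y$ together with $\E[X^k]=\E[Y^k]$ for $k\in[m]$, but the correct argument is asymptotic: for $\eta\ge b$ the inequality $\E[(\eta-Y)^{n-1}]-\E[(\eta-X)^{n-1}]\ge 0$ becomes, after cancelling the matched lower moments, a polynomial in $\eta$ whose leading term is $\binom{n-1}{m+1}(-1)^{m+1}(\E[Y^{m+1}]-\E[X^{m+1}])\,\eta^{n-m-2}$, and nonnegativity for large $\eta$ forces the leading coefficient to be $\ge 0$. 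The paper invokes exactly this fact via Theorem~4.A.58 of \cite{SS07}.

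Your plan for the failure case $n-m\ge 4$ is also incomplete, and the concrete route you propose does not work. Taking $X=X_0+Z$, $Y=Y_0+Z$ with independent $Z$ preserves $\ge_4$, but it cannot create new matched moments: the map from $(\E[X_0^j]-\E[Y_0^j])_{j\le k}$ to $\E[(X_0+Z)^k]-\E[(Y_0+Z)^k]$ is lower triangular with nonzero diagonal, so $\E[(X_0+Z)^k]=\E[(Y_0+Z)^k]$ for $k\in[m]$ forces $\E[X_0^k]=\E[Y_0^k]$ for $k\in[m]$, which Example~\ref{ex-counter} does not satisfy for $m\ge 1$. The paper avoids building an explicit counterexample altogether and instead argues by duality: it characterizes $(n,m){\rm SD}_{[a,b]}$ via the closed convex cone $\mathcal U_{(n,m)\text{-cv}}^{[a,b]}$, then uses M\"uller's result (two integral orders coincide iff their generating cones coincide) and exhibits $u(x)=-(b-x)^{m+2}\in\mathcal U_{(n,m)\text{-cv}}^{[a,b]}$ that cannot agree on $[a,b]$ with any function in $\mathcal U_{(n,m)\text{-cv}}^{[a,c]}$ for $c>b$. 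This yields the strict inclusion of orders without ever writing down a pair $(X,Y)$, and is both shorter and more robust than the lifting you sketch.
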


Theorem \ref{prop-mainMN} shows that the $n$th degree mean-preserving stochastic dominance rules are consistent for $n \le 4$. Furthermore, the $n$th degree risk increase rules are always consistent across different intervals. This finding indicates that when a decision maker uses the $n$th-degree risk increase rule to compare uncertain outcomes, she can uniformly apply its counterpart on $\R$, making it suitable for all bounded uncertainty outcomes.

\section{Proofs}\label{sec:proof}

\subsection{\color{black} Proof of Proposition \ref{th-main}}
Note that $\overline{\mathcal U}_n^{[a,b]}$ is the closure of ${\mathcal U}_n^{[a,b]}$ with respect to the pointwise convergence. Thus, we only need to consider the statements (i), (ii), (iv) and (v).
We aim to prove Proposition \ref{th-main} by considering three steps in the following.
\begin{enumerate}[(a)]
\item Prove (i) $\Leftrightarrow$ (iv) for $n\le 3$.
\item Prove (i) $\Leftrightarrow$ (ii) $\Rightarrow$ (iv) $\Leftrightarrow$ (v) for $n\in\N$.
\item %Provide counterexamples to demonstrate that (i) $\Leftarrow$ (iii) fails for $n\ge 4$. Specifically, 
For any $c,d\in(-\infty,\infty)$ with $c<d$ and $n\ge 4$, there exist $X,Y\in \mathcal X_{[c,d]}$ such that $X\ge_{n}Y$ and $X\not\ge_{n}^{[c,d]}Y$.
\end{enumerate}

{\color{black}
Before showing the proof,
we present an auxiliary lemma that will be used in all above steps.
\begin{lemma}[Proposition 6 of \cite{OR01}]\label{lm-OR01}
For $Z\in\mathcal X$ and $n\in\N$, we have
\begin{align*}
\lim_{\eta\to\infty}\left\{\eta-\left(\E[(\eta-Z)_+^n]\right)^{1/n}\right\}=\E[Z].
\end{align*}
As a result, for $X,Y\in\mathcal X$ and $n\in\N$,
$X\ge_n Y$ implies $\E[X]\ge \E[Y]$.
\end{lemma}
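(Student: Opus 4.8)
The statement has two parts: the limit identity and its consequence for $\ge_n$. The plan is to exploit the boundedness of $Z$ to remove the positive part and then reduce the limit to a purely algebraic factorization. Since $Z\in\mathcal X$ is bounded, there is a constant $M$ with $Z\le M$ almost surely, so for every $\eta> M$ one has $(\eta-Z)_+=\eta-Z$ and hence $\E[(\eta-Z)_+^n]=\E[(\eta-Z)^n]$. All moments $\E[Z^k]$ are then finite, which is what will make the expansions below legitimate; this boundedness is the structural fact the whole argument rests on.

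Writing $f(\eta)=(\E[(\eta-Z)^n])^{1/n}\ge 0$ for $\eta>M$, the goal becomes $\lim_{\eta\to\infty}(\eta-f(\eta))=\E[Z]$. First I would record the asymptotic $f(\eta)/\eta\to 1$: indeed $\E[(\eta-Z)^n]/\eta^n=\E[(1-Z/\eta)^n]\to 1$ by bounded convergence, since $Z/\eta\to 0$ and $(1-Z/\eta)^n$ is uniformly bounded for large $\eta$. The key step is then the factorization
\begin{align*}
\eta-f(\eta)=\frac{\eta^n-f(\eta)^n}{\sum_{j=0}^{n-1}\eta^{n-1-j}f(\eta)^j}.
\end{align*}
For the numerator, the binomial expansion $\E[(\eta-Z)^n]=\sum_{k=0}^n\binom{n}{k}\eta^{n-k}(-1)^k\E[Z^k]$ has $\eta^n$ as its $k=0$ term and $-n\eta^{n-1}\E[Z]$ as its $k=1$ term, so $\eta^n-\E[(\eta-Z)^n]=n\eta^{n-1}\E[Z]+O(\eta^{n-2})$. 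For the denominator, the asymptotic $f(\eta)\sim\eta$ forces each summand $\eta^{n-1-j}f(\eta)^j\sim\eta^{n-1}$, whence the whole sum is $\sim n\eta^{n-1}$. Dividing gives $\eta-f(\eta)\to n\E[Z]/n=\E[Z]$, as desired.

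For the consequence, suppose $X\ge_n Y$ with $n\ge 2$, i.e.\ $\E[(\eta-X)_+^{n-1}]\le \E[(\eta-Y)_+^{n-1}]$ for all $\eta$. Since $t\mapsto t^{1/(n-1)}$ is increasing on $[0,\infty)$, this yields $(\E[(\eta-X)_+^{n-1}])^{1/(n-1)}\le (\E[(\eta-Y)_+^{n-1}])^{1/(n-1)}$, and therefore $\eta-(\E[(\eta-X)_+^{n-1}])^{1/(n-1)}\ge \eta-(\E[(\eta-Y)_+^{n-1}])^{1/(n-1)}$ for every $\eta$. Letting $\eta\to\infty$ and applying the limit identity to each side, with $n$ there replaced by $n-1\ge 1$, produces $\E[X]\ge\E[Y]$. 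The case $n=1$ is immediate, since $X\ge_1 Y$ is first-order stochastic dominance, $F_X\le F_Y$ pointwise, which directly gives $\E[X]\ge\E[Y]$.

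The only delicate point I anticipate is controlling the denominator of the factorization: it is precisely there that the asymptotic $f(\eta)\sim\eta$, and hence the boundedness of $Z$, is indispensable, since without it neither the dropping of the positive part nor the comparison of the leading-order terms is justified. Everything else is routine bookkeeping with the binomial expansion and the finiteness of the moments of $Z$.
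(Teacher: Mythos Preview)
The paper does not give its own proof of this lemma: it is quoted verbatim as Proposition~6 of \cite{OR01} and then used as a black box in the proof of Proposition~\ref{th-main}. So there is nothing in the paper to compare your argument against at the level of proof strategy.

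Your argument is correct and self-contained. The reduction via boundedness (dropping the positive part for $\eta>M$), the algebraic factorization $\eta-f(\eta)=(\eta^n-f(\eta)^n)\big/\sum_{j=0}^{n-1}\eta^{n-1-j}f(\eta)^j$, the binomial expansion of the numerator as $n\eta^{n-1}\E[Z]+O(\eta^{n-2})$, and the asymptotic $f(\eta)\sim\eta$ forcing the denominator to behave like $n\eta^{n-1}$, all combine cleanly to give the limit. The deduction of $\E[X]\ge\E[Y]$ from $X\ge_n Y$ by applying the identity with exponent $n-1$ (and treating $n=1$ separately via FSD) is also sound. Nothing is missing.
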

}

\begin{proof}[Proof of Step (a)]
The cases $n\in\{1,2\}$ are trivial. Let now $n=3$. 

\underline{(i) $\Rightarrow$ (iv):}
It is straightforward to see that (i) implies $\E[(\eta-X)_+^{2}]\le \E[(\eta-Y)_+^{2}]$ for all $\eta\in(-\infty,b]$ and $\E[X]\ge \E[Y]$. For $\eta>b$, we have
\begin{align*}
\E[(\eta-X)_+^{2}]
&=\E[((b-X)+(\eta-b))^2]
=\E[(b-X)^2]+2(\eta-b)\E[b-X]+(\eta-b)^2\\
&\le \E[(b-Y)^2]+2(\eta-b)\E[b-Y]+(\eta-b)^2
=\E[(\eta-Y)_+^{2}]. 
\end{align*}
This yields (iv).

{\color{black}\underline{(iv) $\Rightarrow$ (i):}
Suppose that $X\ge_{3} Y$. It implies that $\E[(\eta-X)^2_+]\le \E[(\eta-Y)^2_+]$ for all $\eta\in[a,b]$. 
By Lemma \ref{lm-OR01}, we have $\E[X]\ge \E[Y]$, and thus, $\E[(b-X)]\le \E[(b-Y)]$. Hence, we have concluded that $X\ge_{3}^{[a,b]}Y$, and this completes the proof.
}
\end{proof}

\begin{proof}[Proof of Step (b)]
The implication (ii) $\Rightarrow$ (v) is trivial. The implication (v) $\Rightarrow$ (iv) is supported by the fact that the mapping $x\mapsto -(\eta-x)_+^{n-1}$ is contained in $\overline{\mathcal U}_n$, and thus, it can be approximated by a subset of functions in $\mathcal U_n$ with respect to the pointwise convergence.
The implication (i) $\Leftrightarrow$ (ii) has been verified in Theorem 1 of \cite{EST09}. It remains to verify (iv) $\Rightarrow$ (v). In fact,
this implication can be verified by the insight that every $u\in\mathcal U_n$ is a 
positive linear combination of singularity functions in the set $\{f:x\mapsto -(\eta-x)_+^{n-1}|\eta\in\R\}$; see \cite{W56}. 
We provide a self-contained proof  based on the integration by parts below.

% Theorem 3 of \cite{F80} (also see \cite{R76})
% , who proved that (iii) is equivalent to
% \begin{itemize}
% \item[(iv')] $\E[u(X)]\ge \E[u(Y)]$ for all $u\in\mathcal U_n'$, where
% \begin{align*}
% \mathcal{U}_n' := \Big\{ u : \mathbb{R} \to \mathbb{R} \mid & u(x) = -\int_{\mathbb{R}} v(y)(y-x)_+^{n-1} \, dy + c, \\
% & \text{where } c \in \mathbb{R}, v \text{ is continuous, positive, and } \int_{\mathbb{R}} v(x) |x|^{n-1} \, dx < \infty \Big\}.
% \end{align*}
% \end{itemize}
% Indeed, the implication (iv') $\Rightarrow$ (iii) confirms that (iv) $\Rightarrow$ (iii) holds as $\mathcal U_n' \subseteq \mathcal U_n$.

% It remains to verify (iv) $\Rightarrow$ (v). To see this,
% %\underline{(iii) $\Rightarrow$ (iv):}
Suppose that $X\ge_n Y$.
% , and we have $\E[(\eta-X)_+^{n-1}]\le \E[(\eta-Y)_+^{n-1}]$ for all $\eta\in\R$. 
It follows from Lemma \ref{lm-OR01} that $\E[X]\ge \E[Y]$. 
First, we assume that $\E[X]>\E[Y]$, and the case that $\E[X]=\E[Y]$ will be studied later. Using Lemma \ref{lm-OR01} again and noting that $\E[X]>\E[Y]$, there exists $M\ge b$ such that 
$F_X^{[k]}(M)\le F_Y^{[k]}(M)$ for all $k\in[n-1]$. 
%Since $(-1)^{n}u^{(n-2)}$ is concave, the left-derivative of $u^{(n-2)}$ exists, and we denoted it by $u_{-}^{(n-1)}$. It follows that $(-1)^n u_{-}^{(n-1)}$ is non-negative and decreasing.
For $u\in\mathcal U_n$, using integration by parts yields
\begin{align*}
\E[u(X)]-\E[u(Y)]& =\int_{\R}u(\eta)\d F_X(\eta)-\int_{\R}u(\eta)\d F_Y(\eta)
\\&=\int_{-\infty}^M (-1)u(\eta)\d \left(F_{Y}(\eta)-F_{X}(\eta)\right)\\
&=(-1)^{-1}u(\eta)(F_{Y}(\eta)-F_{X}(\eta))\big|_{-\infty}^{M}+\int_{-\infty}^M (F_{Y}(\eta)-F_{X}(\eta))\d u(\eta)\\
&=\int_{-\infty}^M(F_{Y}(\eta)-F_{X}(\eta))\d u(\eta) \\& =\int_{-\infty}^M u^{(1)}(\eta)\d \left(F_Y^{[2]}(\eta)-F_X^{[2]}(\eta)\right)\\
&=  u^{(1)}(\eta)\left(F_Y^{[2]}(\eta)-F_X^{[2]}(\eta)\right)\big|_{-\infty}^{M} +\int_{-\infty}^M \left(F_Y^{[2]}(\eta)-F_X^{[2]}(\eta)\right)\d \left( - u^{(1)}(\eta)\right)\\
&\ge \int_{-\infty}^M \left(F_Y^{[2]}(\eta)-F_X^{[2]}(\eta)\right)\d\left( -u^{(1)}(\eta)\right),
\end{align*}
where the inequality follows from $ u^{(1)}\ge 0$  and $F_X^{[2]}(M)\le F_Y^{[2]}(M)$. Using integration by parts repeatedly following a similar argument, we get 
\begin{align*}
& \int_{-\infty}^M \left(F_Y^{[2]}(\eta)-F_X^{[2]}(\eta)\right)\d\left( (-1) u^{(1)}(\eta)\right)
% \\& \ge  (-1)^{n-2} u_{-}^{(n-1)}(\eta)\left(F_Y^{[n]}(\eta)-F_X^{[n]}(\eta)\right)\big|_{-\infty}^{M} +\int_{-\infty}^M \left(F_Y^{[n]}(\eta)-F_X^{[n]}(\eta)\right)\d \left((-1)^{n-1} u_-^{(n-1)}(\eta)\right)\\
&\ge \int_{-\infty}^M \left(F_Y^{[n]}(\eta)-F_X^{[n]}(\eta)\right)\d \left( (-1)^{n-1} u^{(n-1)}(\eta)\right)\ge 0,
\end{align*} 
where 
the last inequality holds because $F_X^{[n]}(\eta)\le F_Y^{[n]}(\eta)$ for all $\eta\in\R$, and $(-1)^{n-1}u^{(n-1)}$ is increasing as $(-1)^{n-1}u^{(n)}\ge 0$. Hence, we have $\E[u(X)]\ge \E[u(Y)]$ if $\E[X]>\E[Y]$. Suppose now that $\E[X]=\E[Y]$. It is straightforward that $X+\epsilon\ge_n Y$ for all $\epsilon>0$. It follows from the previous arguments that $\E[u(X+\epsilon)]\ge \E[u(Y)]$. 
Note that $\E[u(X+\epsilon)]\to\E[u(X)]$ as $\epsilon\downarrow 0$. This gives $\E[u(X)]\ge \E[u(Y)]$. Hence, we have completed the proof.
\end{proof}

\begin{proof}[Proof of Step (c)]
Let us now focus on Step (c). To verify this step, it suffices to show that there exist $a,b\in(-\infty,\infty)$ with $a<b$ and $X,Y\in\mathcal X_{[a,b]}$ such that $X\ge_{4}Y$ and $X\not\ge_{4}^{[a,b]}Y$, and such example has been given in Example \ref{ex-counter}.
To see this, suppose that $X,Y\in\mathcal X_{[a,b]}$ are the random variables satisfying $X\ge_{4}Y$ and $X\not\ge_{4}^{[a,b]}Y$. Then, we have $\E[(\eta-X)_+^3]\le \E[(\eta-Y)_+^3]$ for all $\eta\in\R$. It follows from Lemma \ref{lm-OR01} that $\E[b-X]\le \E[b-Y]$, and hence, $X\not\ge_{4}^{[a,b]}Y$ implies
$\E[(b-X)^2]>\E[(b-Y)^2]$. For $c,d\in(-\infty,\infty)$ with $c<d$ and $n\ge 4$, define $\widetilde{X}=\lambda X+m$ and $\widetilde{Y}=\lambda Y+m$, where $\lambda=(d-c)/(b-a)$ and $m=(bc-ad)/(b-a)$. It is straightforward to see that $\widetilde{X},\widetilde{Y}\in\mathcal X_{[c,d]}$, and
 $X\ge_4 Y$ implies $\widetilde{X}\ge_n \widetilde{Y}$. Additionally, we have
\begin{align*}
\E[(d-\widetilde{X})^2]=\lambda^2\E[(b-X)^2]>\lambda^2\E[(b-Y)^2]=\E[(d-\widetilde{Y})^2].
\end{align*}
Therefore, we have concluded that $\widetilde{X},\widetilde{Y}\in\mathcal X_{[c,d]}$, $\widetilde{X}\ge_n \widetilde{Y}$ and $\widetilde{X}\not\ge_n^{[c,d]} \widetilde{Y}$, which confirms
Step (c). Hence, we complete the proof. 
\end{proof}

\subsection{Proof of Theorem \ref{prop-nSD}}

We first present an auxiliary lemma.
\begin{lemma}\label{lm-prop1}
Let $a,b\in\R$ with $a<b$. There exists a sequence $\{X_n,Y_n\}_{n\in\N}\subseteq \mathcal X_{[a,b]}^2$ such that $X_n\ge_4 Y_n$ and $\E[X_n]-\E[Y_n]>0$ for all $n\in\N$, and $(\E[X_n^2]-\E[Y_n^2])/(\E[X_n]-\E[Y_n])\to\infty$.
\end{lemma}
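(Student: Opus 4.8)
\textbf{Proof proposal for Lemma \ref{lm-prop1}.}
The plan is to take the pair $(X,Y)$ from Example \ref{ex-counter}, which lives in $\mathcal X_{[0,1]}$ and satisfies $X\ge_4 Y$ together with $\E[X]>\E[Y]$ and $\E[(1-X)^2]>\E[(1-Y)^2]$, and to \emph{perturb} it so that the gap between second moments stays bounded away from zero while the gap between first moments is driven to zero. A natural way to do this is to slide a small amount of mass of $Y$ (or of $X$) toward the right endpoint so that $\E[X_n]-\E[Y_n]\downarrow 0$ but the fourth-order dominance is preserved and the second-moment gap does not collapse. Concretely, I would look for a one-parameter family $Y_n$ obtained from $Y$ by a mean-decreasing (hence $\le_4$-compatible) modification, or a family $X_n$ obtained from $X$ by a mean-increasing modification, chosen so that $\E[X_n]-\E[Y_n]=O(1/n)$; since the distributions vary continuously and $X\ge_4 Y$ holds with the strict inequality $\E[(\eta-X)_+^3]<\E[(\eta-Y)_+^3]$ on the relevant open region (as Figure \ref{fig-ex1} suggests), a sufficiently small perturbation keeps $X_n\ge_4 Y_n$.

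The key steps, in order, would be: (1) record from Example \ref{ex-counter} the three facts $X\ge_4 Y$, $\E[X]-\E[Y]=37/8100\cdot(\text{some positive constant})>0$ and $\E[(1-X)^2]-\E[(1-Y)^2]=341/72900>0$ — i.e.\ at the base point the ratio $(\E[X^2]-\E[Y^2])/(\E[X]-\E[Y])$ is a finite positive number; (2) introduce the perturbation parameter and verify that the first-moment gap is a continuous function of it vanishing at a limiting value, while the second-moment gap tends to a strictly positive limit; (3) verify that $X_n\ge_4 Y_n$ persists for large $n$, using either a direct check that $\E[(\eta-X_n)_+^3]\le\E[(\eta-Y_n)_+^3]$ for all $\eta\in\R$ (the left side being continuous in the parameter and the base inequality strict except possibly at isolated $\eta$, where one checks it directly or uses that the difference of the $F^{[4]}$ functions stays nonnegative), or by expressing $X_n$ as $X$ plus a nonnegative shift and $Y_n$ as a $\le_4$-smaller variant of $Y$; (4) conclude that $(\E[X_n^2]-\E[Y_n^2])/(\E[X_n]-\E[Y_n])\to\infty$ since the numerator stays bounded below by a positive constant and the denominator tends to $0$.

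The main obstacle I anticipate is step (3): ensuring that the perturbed pair still satisfies $X_n\ge_4 Y_n$ for \emph{all} $\eta\in\R$, not merely on a compact set. On $(-\infty,0]$ and on $[1,\infty)$ the difference $\E[(\eta-Y_n)_+^3]-\E[(\eta-X_n)_+^3]$ is governed by the first few moments (it is a cubic polynomial in $\eta$ for $\eta$ beyond the common support), so one must check that driving $\E[X_n]-\E[Y_n]\to 0$ does not make this polynomial go negative for large $\eta$; since its leading behaviour at $+\infty$ is controlled by $3(\E[X_n]-\E[Y_n])\eta^2+\dots$ this requires the first-moment gap to remain (weakly) positive, which is exactly why the perturbation should keep $\E[X_n]>\E[Y_n]$ strictly rather than reaching equality. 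A clean way to sidestep delicate estimates is to define the perturbation as a genuine mean-preserving-type modification composed with an infinitesimal deterministic shift: write $X_n = X + 1/n$ truncated appropriately, or better, keep $X$ fixed and replace $Y$ by $Y_n$ with a vanishingly small mass moved rightward within $[0,1]$, so that $Y_n \le_4 Y \le_4$-compatibly and $X\ge_4 Y_n$ follows from $X\ge_4 Y$ by transitivity once $\E[X]\ge\E[Y_n]$; then only the elementary monotonicity of the moment gaps in $n$ needs to be checked. I would adopt whichever of these two concrete constructions makes the all-$\eta$ verification shortest, and present the resulting explicit family $\{(X_n,Y_n)\}$ with the three required properties checked by direct computation.
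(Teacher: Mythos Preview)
Your plan has a genuine gap: step (2) --- keeping $\Delta_2^{(n)}:=\E[X_n^2]-\E[Y_n^2]$ bounded below by a positive constant while $\Delta_1^{(n)}:=\E[X_n]-\E[Y_n]\to 0^+$ --- is incompatible with step (3). For any $\eta$ to the right of the common support,
\begin{align*}
\E[(\eta-Y_n)_+^3]-\E[(\eta-X_n)_+^3]=3\Delta_1^{(n)}\eta^2-3\Delta_2^{(n)}\eta+\Delta_3^{(n)},
\end{align*}
a quadratic in $\eta$ whose minimum value equals $\Delta_3^{(n)}-3(\Delta_2^{(n)})^2\big/(4\Delta_1^{(n)})\to-\infty$ under your scaling; hence $X_n\ge_4 Y_n$ must eventually fail. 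This is exactly the tail obstacle you flag, but it is a hard constraint, not a technicality: fourth-order dominance forces $\Delta_2^{(n)}\to 0$ along with $\Delta_1^{(n)}$. Your proposed workarounds do not help either. Moving mass of $Y$ rightward makes $Y_n$ first-order, hence fourth-order, \emph{larger} than $Y$; you obtain $Y_n\ge_4 Y$, not $Y_n\le_4 Y$, so transitivity says nothing about $X\ge_4 Y_n$. Conversely, any modification that genuinely yields $Y_n\le_4 Y$ (or $X_n\ge_4 X$, as with $X_n=X+1/n$) forces $\E[Y_n]\le\E[Y]$ (resp.\ $\E[X_n]\ge\E[X]$) and so \emph{widens} the first-moment gap.

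The paper's proof respects this constraint by a different construction. It fixes $Y$ and builds a two-parameter family $X_n$ on $[0,9]$, with atoms at $2$ and at a \emph{moving} point $8+m_n$, where the weights and the location are tied by the explicit relation $\epsilon_n=m_n/(54(1+m_n))$ so that $\Delta_1^{(n)},\Delta_2^{(n)},\Delta_3^{(n)}$ all tend to zero at coordinated rates. The verification of $X_n\ge_4 Y_n$ then reduces to computing the minimum of the tail quadratic above and showing that its limiting value is strictly positive --- precisely the estimate your outline hoped to avoid. In effect the family is engineered so that $(\Delta_2^{(n)})^2/\Delta_1^{(n)}$ stays bounded while $\Delta_2^{(n)}/\Delta_1^{(n)}\to\infty$; a generic perturbation of Example~\ref{ex-counter} will not achieve this, and an explicitly calibrated family is required.
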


\begin{proof}
%Note that $4{\rm SD}_{\R}$ is invariant under scale  or position transformation, i.e., $X\ge_4 Y$ if and only if $\lambda X+m\ge_4 \lambda Y+m$ for some $\lambda>0$ and $m\in\R$. 
We assume without loss of generality that $a=0$ and $b=9$. Let $\epsilon_n>0$ and $m_n\in(0,1)$ be such that $\epsilon_n=m_n/(54(1+m_n))$ and $m_n\downarrow 0$. It holds that $m_n/\epsilon_n\downarrow 54$ as $n\to\infty$. Define $\{X_n,Y_n\}_{n\in\N}\subseteq \mathcal X_{[0,9]}^2$ with 
\begin{align*}
F_{X_n}=\left(\frac{8}{9}+\epsilon_n\right)\delta_2+\left(\frac{1}{9}-\epsilon_n\right)\delta_{8+m_n} \mbox{~~and~~}F_{Y_n}=\frac13 \delta_0+\frac23 \delta_{4}.
\end{align*}
It is straightforward to check that $\E[X_n]-\E[Y_n]=5\epsilon_nm_n>0$ for all $n\in\N$.
We aim to verify that $X_n\ge_4 Y_n$ if $n\in\N$ is sufficiently large. To see this, denote by $f_n(\eta)=\E[(\eta-Y_n)_+^3]-\E[(\eta-X_n)_+^3]$ for $n\in\N$ and $\eta\in\R$. It is straightforward to see that $f_n(\eta)\ge 0$ for $\eta\in (-\infty,4]$. By standard calculation, we have
\begin{align*}
9f_n(\eta)=g(\eta)-9\epsilon_n(\eta^3-6\eta^2+12\eta-8),
% (1-9\epsilon_n)\eta^3+6(9\epsilon_n-4)\eta^2+12(16-9\epsilon_n)\eta+8(9\epsilon_n-40),
~~\eta\in[4,8+m_n],
\end{align*}
where $g(\eta)=\eta^3-24\eta^2+192\eta-320$.
One can check that the mapping $g(\eta)$ is increasing on $\R$ and $g(4)>0$. Hence, we have $f_n(\eta)\ge g(4)-9\epsilon_n(\eta^3-6\eta^2+12\eta-8)$ for $\eta\in [4,8+m_n]$.
For sufficiently large $n$, we have $\epsilon_n$ is small enough, and thus, $f_n(\eta)\ge 0$ for $\eta\in [4,8+m_n]$. Let us now consider the case $\eta\ge 8+m_n$.
Denote by $A_n=(1-9\epsilon_n)(6+m_n)$, and it holds that $A_n-6=5m_n^2/(6(1+m_n))>0$ and $A_n\to 6$ as $n\to\infty$. By some standard calculations, we have
\begin{align*}
9f_n(\eta)&=3(A_n-6)\eta^2+3[60-A_n(10+m_n)]\eta+A_n(m_n^2+18m_n+84)-312\\
&\ge -\frac{3[A_n(10+m_n)-60]^2}{4(A_n-6)}+A_n(m_n^2+18m_n+84)-312\\
% &=-\frac{3\left(9m_n^2+144m_n+540-16\frac{m_n}{\epsilon_n}-\frac{m_n^2}{\epsilon_n}\right)^2}{4\left(\frac{m_n}{\epsilon_n^2}-\frac{54}{\epsilon_n}-9\frac{m_n}{\epsilon_n}\right)}+A_n(m_n^2+18m_n+84)-312
&=-\frac{3}{4}\left[(10-m_n)\sqrt{A_n-6}-\frac{6m_n}{\sqrt{A_n-6}}\right]^2+A_n(m_n^2+18m_n+84)-312\\
&\to -\frac{3}{4}\lim_{n\to\infty}\frac{36m_n^2}{A_n-6}+200
=-\frac{162}{5}+200>0,
\end{align*}
where we have calculated the minimum of a quadratic function in the  
the first inequality by noting that $A_n>6$, which implies that $f_n(\eta)>0$ for $\eta\ge 8+m_n$ when  $n$ is sufficiently large. Therefore, we have concluded that $X_n\ge_4 Y_n$ if $n\in\N$ is sufficiently large. Note that
\begin{align*}
\frac{\E[X_n^2]-\E[Y_n^2]}{\E[X_n]-\E[Y_n]}&=\frac{-9\epsilon_nm_n^2+m_n^2-144\epsilon_nm_n+16m_n-540\epsilon_n}{m_n-9\epsilon_nm_n-54\epsilon_n}\\
&=\frac{16\frac{m_n}{\epsilon_n}-540-9m_n^2+\frac{m_n^2}{\epsilon_n}-144m_n}{45m_n}\to \infty~~{\rm as}~n\to\infty,
\end{align*}
where the convergence follows from $m_n\downarrow 0$ and $m_n/\epsilon_n\downarrow 54$. This {\color{black}completes} the proof.
\end{proof}

\begin{proof}[Proof of Theorem \ref{th-nSD}]
The implication in \eqref{eq:implies} follows from the equivalence between (i) and (ii) in Proposition \ref{prop-main}. Note that the equivalence between (i) and (iii) in Proposition \ref{prop-main} holds for $n\le 3$, and thus, the backward implication of \eqref{eq:implies} also holds when $n\le 3$. It remains to verify that the backward implication fails if $n\ge 4$. To see this, we assume without loss of generality that $a=0$. Let $X,Y\in\mathcal X_{[0,b]}$ be such that
$\E[X]>\E[Y]$,
\begin{align*}
X\ge_4 Y~~{\rm and}~~ \lambda:=\frac{\E[X^2]-\E[Y^2]}{\E[X]-\E[Y]}\ge 2d,
\end{align*}
% \begin{align*}
% \mbox{$X\ge_4 Y$ and $\lambda:=\frac{\E[X^2]-\E[Y^2]}{\E[X]-\E[Y]}\ge 2d$},
% \end{align*}
where the existence is due to Lemma \ref{lm-prop1}. Let $\gamma\in(2c/\lambda,2d/\lambda]\subseteq (0,1]$, and define $\widetilde{X}=\gamma X$ and $\widetilde{Y}=\gamma Y$. It holds that $\widetilde{X},\widetilde{Y}\in\mathcal X_{[0,b]}$, $\E[\widetilde{X}]>\E[\widetilde{Y}]$,
\begin{align*}
\widetilde{X}\ge_4\widetilde{Y}~~{\rm and}~~ \frac{\E[\widetilde{X}^2]-\E[\widetilde{Y}^2]}{\E[\widetilde{X}]-\E[\widetilde{Y}]}=\gamma\frac{\E[X^2]-\E[Y^2]}{\E[X]-\E[Y]}\in (2c,2d].
\end{align*}
% \begin{align*}
% \mbox{$\widetilde{X}\ge_4\widetilde{Y}$ and $\frac{\E[\widetilde{X}^2]-\E[\widetilde{Y}^2]}{\E[\widetilde{X}]-\E[\widetilde{Y}]}\in (2c,2d]$}.
% \end{align*}
Therefore, we have
\begin{align*}
&\E[(c-\widetilde{X})^2]-\E[(c-\widetilde{Y})^2]
=(\E[\widetilde{X}]-\E[\widetilde{Y}])\left(\frac{\E[\widetilde{X}^2]-\E[\widetilde{Y}^2]}{\E[\widetilde{X}]-\E[\widetilde{Y}]}-2c\right)>0;\\
&\E[(d-\widetilde{X})^2]-\E[(d-\widetilde{Y})^2]
=(\E[\widetilde{X}]-\E[\widetilde{Y}])\left(\frac{\E[\widetilde{X}^2]-\E[\widetilde{Y}^2]}{\E[\widetilde{X}]-\E[\widetilde{Y}]}-2d\right)\le 0.
\end{align*}
This implies that $\widetilde{X}\ge_4^{[0,d]}\widetilde{Y}$ and $\widetilde{X}\not\ge_n^{[0,c]}\widetilde{Y}$ for $n\ge 4$. Since $\ge_4^{[0,d]}$ is more stringent than $\ge_n^{[0,d]}$ for $n\ge 4$, we have completed the proof.
% This implies that $\widetilde{X}\ge_n^{[0,d]}\widetilde{Y}$ and $\widetilde{X}\not\ge_n^{[0,c]}\widetilde{Y}$ for $n\ge 4$. Hence, we complete the proof.
\end{proof}

% \subsection{Proof of Corollary \ref{co-nSD}}
% By Proposition \ref{th-main}, it follows immediately that $X\ge_n^{[a,b]}Y$ implies $X\ge_n^{[a,c]}Y$, and the converse is true if $n\le 3$. 

% Let $r=0$, $s=9/4$ and $X,Y\in\mathcal X_{[r,s]}$ be defined in Example \ref{ex-counter}. 
% It holds that $\E[(\eta-X)_+^3]\le \E[(\eta-Y)_+^3]$ for all $\eta\in\R$ and $\E[(s-X)^2]> \E[(s-Y)^2]$. Note that $\E[X]>\E[Y]$, and thus, there exists $t_0\in\R$ with $t_0>s$ such that $t\ge t_0$ and $\E[(t-X)^2]\le \E[(t-Y)^2]$. Therefore, we concluded that $X\ge_4^{[r,t]} Y$ for $t\ge t_0$, and $X\not\ge_4^{[r,s]} Y$. Below we show that the converse is fails if $n\ge 4$ by giving counterexamples based on $X,Y$. 

% For any $a,b,c\in(-\infty,\infty)$ with $a<b<c$ and $n\ge 4$, denote by $\lambda_t=(c-b)/(t-s)$ and $m_t=(bt-cs)/(t-s)$, and define $X_t=\lambda_t X+m_t$ and $Y_t=\lambda_t Y+m_t$. Since  $X_t,Y_t\le s$, we have $X_t,Y_t\le b$. Noting that $X,Y\ge r$, we have
% $\min\{X_t,Y_t\}\ge \lambda_t r+m_t\to b$ as $t\to\infty$. Hence, for large enough $t\ge t_0$, we have $X_t,Y_t\in\mathcal X_{[a,b]}$. It is straightforward to see that $X\ge_4^{[r,t]} Y$ implies $X_t\ge_n^{[a,c]} Y_t$. Additionally, it follows from $\E[(s-X)^2]> \E[(s-Y)^2]$ that $\E[(b-X_t)^2]> \E[(b-Y_t)^2]$, which, in turn, implies $X_t\not\ge_n^{[a,b]} Y_t$. This provides a counterexample, and hence, we complete the proof.
% \qed

\subsection{Proof of Theorem \ref{prop-mainMN}}
In this section, closure refers specifically to pointwise convergence.
For $n\ge 3$, define the class of 
$n$-concave functions on $[a,b]$ as follows
\begin{align*}
\mathcal U_{n\text{-cv}}^{[a,b]}=\{u:\R\to\R| \mbox{$(-1)^{n-2}u^{(n-2)}$ is increasing and concave on $[a,b]$}\}.
\end{align*}
The set $\mathcal{U}_{n\text{-cv}}^{[a,b]}$ is a closed convex cone. For any $u\in\mathcal U_{n\text{-cv}}^{[a,b]}$, there exists a sequence $\{u_k\}_{k\in \N}$ such that $(-1)^{n}u_k^{(n)}\le 0$ on $[a,b]$ for all $k\in\N$ and $u_k\to u$ pointwisely.
Denote by $\mathcal U_{(n,m)\text{-cv}}^{[a,b]}=\bigcap_{i=m+1}^n \mathcal U_{i\text{-cv}}^{[a,b]}$, which is also a closed convex cone. The following result is straightforward to verify by sharing a similar proof of Proposition \ref{prop-main} (see also Theorem 1 of \cite{L14}).
\begin{lemma}\label{lm-eqmn}
Let $a,b\in[-\infty,\infty]$ with $a<b$ and $m,n\in\N$ with $m\le n-1$. For $X,Y\in\mathcal X_{[a,b]}$, $X$ dominates $Y$ in $(n,m){\rm SD}_{[a,b]}$ if and only if $\E[u(X)]\ge \E[u(Y)]$ for all $u\in \mathcal U_{(n,m)\text{-\rm cv}}^{[a,b]}$.
\end{lemma}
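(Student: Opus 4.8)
The plan is to prove both directions by mimicking the proof of Proposition \ref{prop-main}, of which this lemma is the $m$-parameter generalization (the case $m=0$ is essentially the equivalence (i)$\Leftrightarrow$(ii) there). Because $\mathcal U_{(n,m)\text{-cv}}^{[a,b]}$ is a closed convex cone whose members are pointwise limits of smooth functions obeying the associated one-sided derivative inequalities, it suffices to argue the sufficiency direction for smooth $u$ and then pass to the closure, whereas the necessity direction only requires producing explicit members of the cone. I would treat the substantive case of finite $b$; the case $b=\infty$ reduces to the $\R$-formulation and is handled as in Proposition \ref{prop-main}.

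For sufficiency, set $\Delta_k=F_Y^{[k]}-F_X^{[k]}$ and integrate by parts $n-1$ times exactly as in Step (b) of the proof of Proposition \ref{prop-main}. Since $\Delta_k(a)=0$ for all $k$, this yields
\begin{align*}
\E[u(X)]-\E[u(Y)]=\sum_{k=2}^{n}(-1)^{k}u^{(k-1)}(b)\,\Delta_k(b)+(-1)^{n-1}\int_a^b \Delta_n(\eta)\,\d u^{(n-1)}(\eta).
\end{align*}
The equalities $\Delta_k(b)=0$ for $k\in[m+1]$ annihilate the boundary terms of order $k\le m+1$. For each $k$ with $m+2\le k\le n$ the boundary term is nonnegative, because $\Delta_k(b)\ge 0$ (part of $X\ge_n^{[a,b]}Y$) while $(-1)^{k}u^{(k-1)}(b)\ge 0$ is precisely the monotonicity half of the condition defining $\mathcal U_{k\text{-cv}}^{[a,b]}$, a cone containing $\mathcal U_{(n,m)\text{-cv}}^{[a,b]}$. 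Finally the integral is nonnegative, since $\Delta_n\ge 0$ on $[a,b]$ and the concavity half of the condition defining $\mathcal U_{n\text{-cv}}^{[a,b]}$ makes $(-1)^{n-1}\,\d u^{(n-1)}$ a nonnegative measure. Hence $\E[u(X)]\ge\E[u(Y)]$ for every $u$ in the cone.

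For necessity, I would feed three families of test functions from $\mathcal U_{(n,m)\text{-cv}}^{[a,b]}$ into the hypothesis $\E[u(X)]\ge\E[u(Y)]$. First, each singularity function $u(x)=-(\eta_0-x)_+^{n-1}$ lies in the cone, because every $(-1)^{i-2}u^{(i-2)}$ is a negative multiple of a truncated power $(\eta_0-x)_+^{p}$ with $p\ge 1$, hence increasing and concave; inserting it recovers $F_X^{[n]}(\eta_0)\le F_Y^{[n]}(\eta_0)$ for all $\eta_0\in[a,b]$. Second, $u(x)=-(b-x)^{k-1}$ belongs to the cone for every $k$ and delivers $F_X^{[k]}(b)\le F_Y^{[k]}(b)$ for all $k\in[n]$. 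Third, for the low orders $k\le m+1$ the opposite-sign polynomial $u(x)=+(b-x)^{k-1}$ is \emph{also} admissible, since the sign restrictions imposed by $\mathcal U_{(n,m)\text{-cv}}^{[a,b]}$ act only on sufficiently high derivatives, which vanish for these low-degree polynomials; this reverses the previous inequality and forces the equalities $F_X^{[k]}(b)=F_Y^{[k]}(b)$ for $k\in[m+1]$. Together these conclusions are exactly the defining conditions of $(n,m){\rm SD}_{[a,b]}$.

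The delicate point is the index bookkeeping in the necessity step: one must determine precisely for which orders \emph{both} signs $\pm(b-x)^{k-1}$ lie in $\mathcal U_{(n,m)\text{-cv}}^{[a,b]}$ (these produce the equalities, for $k\le m+1$) and for which only one sign does (these produce the one-sided inequalities, for $k\ge m+2$), so that the resulting pattern of equalities and inequalities matches $(n,m){\rm SD}_{[a,b]}$ exactly. In parallel, the integration-by-parts identity must be justified for the non-smooth members of the cone, where $u^{(n-1)}$ exists only as a function of bounded variation and $\d u^{(n-1)}$ is its Stieltjes measure; here the closed-cone approximation quoted before the lemma, together with the vanishing of all boundary contributions at the lower endpoint $a$, supplies the rigor.
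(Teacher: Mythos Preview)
Your sufficiency argument is correct and matches the integration-by-parts template the paper invokes. The gap is in the necessity step, specifically in your third family of test functions. You assert that $u(x)=+(b-x)^{k-1}$ lies in $\mathcal U_{(n,m)\text{-cv}}^{[a,b]}$ for every $k\le m+1$ because ``the sign restrictions \dots\ act only on sufficiently high derivatives, which vanish for these low-degree polynomials.'' This fails at the top index $k=m+1$. The factor $\mathcal U_{(m+1)\text{-cv}}^{[a,b]}$ requires $(-1)^{m-1}u^{(m-1)}$ to be \emph{increasing} on $[a,b]$, which is a sign condition on the $m$-th derivative, not only on the $(m+1)$-th; for $u(x)=(b-x)^{m}$ one computes $(-1)^{m-1}u^{(m-1)}(x)=m!\,(b-x)$, which is strictly decreasing. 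Hence $+(b-x)^{m}$ is excluded from the cone, and your test functions deliver the equalities $F_X^{[k]}(b)=F_Y^{[k]}(b)$ only for $k\le m$, leaving the case $k=m+1$ open.

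This missing equality is not recoverable from the cone as literally written. Take $[a,b]=[0,1]$, $m=2$, any $n\ge 3$, $X\equiv\tfrac12$, and $Y$ uniform on $\{0,1\}$. Every $u\in\mathcal U_{(n,2)\text{-cv}}^{[0,1]}\subseteq\mathcal U_{3\text{-cv}}^{[0,1]}$ is concave, so $\E[u(X)]\ge\E[u(Y)]$ holds throughout the cone; yet $\E[(1-X)^2]=\tfrac14\ne\tfrac12=\E[(1-Y)^2]$, so $(n,2)\mathrm{SD}_{[0,1]}$ fails. Thus with the intersection running over $i\in\{m+1,\dots,n\}$ the induced order is $(n,m-1)\mathrm{SD}_{[a,b]}$ rather than $(n,m)\mathrm{SD}_{[a,b]}$. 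The paper gives no detailed argument here (it defers to Proposition~\ref{prop-main} and \cite{L14}), so there is nothing independent to compare against; the resolution is to align the indices so that the characterizing class constrains only $u^{(j)}$ for $j\in\{m+1,\dots,n\}$, which is exactly the hypothesis your bookkeeping needs in order for both signs $\pm(b-x)^{k-1}$ to be admissible for all $k\le m+1$.
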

Note that $\mathcal U_{(n,m)\text{-cv}}^{\R}\subseteq \mathcal U_{(n,m)\text{-cv}}^{[a,b]}$ whenever $a,b\in\R$ and $a<b$. Hence, Lemma \ref{lm-eqmn} yields \eqref{eq:implies2} in Theorem \ref{th-mainMN}. 

Next, we aim to show that the backward implication of \eqref{eq:implies2} holds if $n-m\le 3$. To see this, 
suppose that $X,Y\in\mathcal X_{[a,b]}$ satisfy that $X$ dominates $Y$ in $(n,m){\rm SD}_{\R}$. The cases $n-m\in\{1,2\}$ are trivial. Let now $n-m=3$. It suffices to verify that $\E[(b-X)^{m+1}]\le \E[(b-Y)^{m+1}]$. Since $X$ dominates $Y$ in $(n,m){\rm SD}_{\R}$, we have $X\ge_n Y$. Also note that $\E[X^k]=\E[Y^k]$ for $k\in[m]$. By Theorem 4.A.58 of \cite{SS07}, either $\E[X^{m+1}]=\E[Y^{m+1}]$ or $(-1)^{m+1}\E[X^{m+1}]<(-1)^{m+1}\E[Y^{m+1}]$ holds, which further implies that $\E[(b-X)^{m+1}]\le \E[(b-Y)^{m+1}]$. This completes the proof of the backward implication of \eqref{eq:implies2} for $n-m\le 3$. 

It remains to verify that the backward implication of \eqref{eq:implies2} fails when $n-m \geq 4$. Unlike in Proposition \ref{prop-main}, where a counterexample is presented, here we seek to demonstrate this result through an alternative approach. Such an approach is based on the following lemma, which is a direct result from Corollary 3.8 of \cite{M97}.

\begin{lemma}\label{lm-iff}
Let $a,b\in\R$ with $a<b$, and let $\mathcal V_1$ and $\mathcal V_2$ be two closed convex cones of real-valued functions on $[a,b]$.
Suppose that $\succsim_1$ and $\succsim_2$ are two orderings on $\mathcal X_{[a,b]}$, which satisfy for $X,Y\in\mathcal X_{[a,b]}$ and $i\in\{1,2\}$, $X\succsim_i Y$ if and only if $\E[u(X)]\ge \E[u(Y)]$ for all $u\in\mathcal V_i$. Then, the orderings $\succsim_1$ and $\succsim_2$ are equivalent if and only if $\mathcal V_1=\mathcal V_2$.
\end{lemma}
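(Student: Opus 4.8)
\textbf{Proof proposal for Lemma \ref{lm-iff}.}

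The plan is to invoke the duality between closed convex cones of functions and the stochastic orderings they induce, which is precisely the content of Corollary 3.8 of \cite{M97}. First I would recall that each $\succsim_i$ is, by hypothesis, the \emph{integral stochastic order} generated by the cone $\mathcal V_i$, i.e. $X\succsim_i Y$ iff $\E[u(X)]\ge\E[u(Y)]$ for all $u\in\mathcal V_i$. The ``if'' direction is immediate: if $\mathcal V_1=\mathcal V_2$ then the two defining conditions coincide verbatim, so $\succsim_1$ and $\succsim_2$ are the same ordering. All the work is in the ``only if'' direction.

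For the converse, suppose $\succsim_1$ and $\succsim_2$ coincide as orderings on $\mathcal X_{[a,b]}$; I want to conclude $\mathcal V_1=\mathcal V_2$. The key point is that an integral stochastic order determines its \emph{maximal} generator, namely the set of all $u$ that are ``monotone'' with respect to the order in the sense that $X\succsim Y \Rightarrow \E[u(X)]\ge\E[u(Y)]$; this maximal generator is always a closed convex cone (closed under pointwise limits, sums, and nonnegative scalings, since expectations behave linearly and limits can be passed through for bounded random variables on a compact interval, e.g.\ via dominated convergence using boundedness of $[a,b]$). Corollary 3.8 of \cite{M97} states that a \emph{closed} convex cone $\mathcal V$ of functions on $[a,b]$ is recovered exactly as this maximal generator of the order it induces — in other words, the map ``cone $\mapsto$ induced order'' is injective on closed convex cones. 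Applying this: since $\mathcal V_1$ and $\mathcal V_2$ induce the same order, they have the same maximal generator, and since both are already closed convex cones, $\mathcal V_1=\mathcal V_2$.

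Concretely, the recovery statement I would cite amounts to: for a closed convex cone $\mathcal V$, one has $u\in\mathcal V$ if and only if $\E[u(X)]\ge\E[u(Y)]$ for every pair $X,Y\in\mathcal X_{[a,b]}$ with $X\succsim_{\mathcal V}Y$. The nontrivial inclusion here — that order-monotonicity of $u$ forces $u\in\mathcal V$ — is exactly where closedness and the convex-cone structure are used, and it is delivered by the separating hyperplane / Hahn–Banach argument underlying Corollary 3.8 of \cite{M97} (separating a hypothetical $u\notin\mathcal V$ from the closed cone $\mathcal V$ by a signed measure, then normalizing its Jordan decomposition to produce the witnessing pair $X,Y$). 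I do not plan to reprove that separation argument; I will simply quote it.

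The main obstacle — really the only one — is making sure the hypotheses of Corollary 3.8 of \cite{M97} are met by our setup: that the ambient space is functions on the compact interval $[a,b]$ (so that the relevant random variables are bounded and tight, and pointwise limits interact well with expectations), and that $\mathcal V_1,\mathcal V_2$ are genuinely \emph{closed} convex cones in the appropriate topology used there. Both are given to us in the statement of the lemma. Once this bookkeeping is in place, the equivalence ``$\succsim_1\ =\ \succsim_2\iff\mathcal V_1=\mathcal V_2$'' is a direct transcription of that corollary, and the proof is complete.
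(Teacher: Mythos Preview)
Your proposal is correct and matches the paper's own treatment exactly: the paper does not give an independent proof but simply states that the lemma is a direct result from Corollary~3.8 of \cite{M97}, which is precisely what you invoke. Your additional commentary on why the hypotheses of that corollary are met is accurate and, if anything, more detailed than what the paper provides.
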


We complete the proof by 
verifying that $(n,m){\rm SD}_{[a,b]}$ is a strictly more stringent rule than $(n,m){\rm SD}_{[a,c]}$ on $\mathcal X_{[a,b]}$,
where $a,b,c\in\R$ with $a<b<c$ and $m,n\in\N$ with $n-m\ge 4$. 
Choose $u(x)=-(b-x)^{m+2}$ for $x\in\R$. It is straightforward to see that $u\in \mathcal U_{(n,m)\text{-cv}}^{[a,b]}$. Note that $\mathcal U_{(n,m)\text{-cv}}^{[a,b]}$ and $\mathcal U_{(n,m)\text{-cv}}^{[a,c]}$ are both closed convex cone.
Combining Lemmas \ref{lm-eqmn} and \ref{lm-iff}, it suffices to verify that $v\not\equiv u$ on $[a,b]$ for all $v\in \mathcal U_{(n,m)\text{-cv}}^{[a,c]}$. To see this, we assume by contradiction that there exists $v\in \mathcal U_{(n,m)\text{-cv}}^{[a,c]}$ such that $v\equiv u$ on $[a,b]$. It holds that 
\begin{align}\label{eq-v^k}
v^{(k)}(x)=u^{(k)}(x)=\frac{(-1)^{k+1}(m+2)!}{(m+2-k)!}(b-x)^{m+2-k}~~{\rm for}~x\in[a,b]~{\rm and}~k\in[m+2].
\end{align}
Define $f_k(x)=(-1)^{k}v^{(k)}(x)$ for $x\in[a,c]$ and $k\in[m+2]$. We have that
%$f_k^{(s)}=(-1)^sf_{k+s}$ for $s,k\in\N$ with $s+k\le m+2$, and 
$f_k(x)$ is increasing and concave on $[a,c]$ for $k\in\{m-1,m,m+1,m+2\}$ as $v\in \mathcal U_{(n,m)\text{-cv}}^{[a,c]}$ with $n-m\ge 4$.
%Define $f_{m+2}(x)=(-1)^{m+2}v^{(m+2)}(x)$ for $x\in\R$. 
Note that $f_{m+2}(x)=-(m+2)!$ on $[a,b]$, and thus, $f_{m+2}$ must be the constant $-(m+2)!$ on $[a,c]$ as it is increasing and concave.
% The equation \eqref{eq-m+1} implies $f_{m+2}(x)=-(m+2)!$ on $[a,b]$.
% Since $v\in \mathcal U_{(m+4,m)\text{-cv}}^{[a,c]}$, we have $v\in \mathcal U_{m+4\text{-cv}}^{[a,c]}$, which implies that $f_{m+2}(x)$ is increasing and concave for $x\in [a,c]$. Hence, $f_{m+2}$ must be the constant $-(m+2)!$ on $[a,c]$. 
Further, the equation \eqref{eq-v^k} implies $f_{m+1}(x)=-(m+2)!(b-x)$ for $x\in[a,b]$. Since $f_{m+1}^{(1)}(x)=-f_{m+2}=(m+2)!$ for $x\in[a,c]$, we have $f_{m+1}(x)=-(m+2)!(b-x)$ for $x\in[a,c]$. This means that $f_{m+1}(x)>0$ for $x\in[b,c]$. Note that $f_m^{(1)}(x)=-f_{m+1}(x)<0$ for $x\in[b,c]$. This contradicts the fact that $f_m$ is increasing on $[a,c]$. Hence, we have concluded that $(n,m){\rm SD}_{[a,b]}$ is a strictly more stringent rule than $(n,m){\rm SD}_{[a,c]}$ on $\mathcal X_{[a,b]}$, where $a<b<c$ and $n-m\ge 4$. This completes the proof.\qed

\section*{Acknowledgments}

The authors thank Tiantian Mao and Lin Zhao for their helpful discussions and bringing up relevant references. Ruodu Wang is supported by the Natural Sciences and Engineering Research Council of Canada (RGPIN-2018-03823, RGPAS-2018-522590) and Canada Research Chairs (CRC- 2022-00141).

\small

\end{document}